\title{Discrete Cucker-Smale's flocking model with a weakly singular weight}
\author{Jan Peszek\footnote{{\it Email address:} {\tt j.peszek@mimuw.edu.pl}}\\
{\it\footnotesize Institute of Applied Mathematics and Mechanics,}\\
{\it\footnotesize University of Warsaw,}\\
{\it\footnotesize ul. Banacha 2, 02-097 Warsaw, Poland}}
\date{\today}
\renewcommand{\it}{\sl}
\newcommand{\barint}{
         \rule[.036in]{.12in}{.009in}\kern-.16in
          \displaystyle\int  }
\def\r{{\mathbb{R}}}
\begin{document}

\newtheorem{theo}{\bf Theorem}[section]
\newtheorem{coro}{\bf Corollary}[section]
\newtheorem{lem}{\bf Lemma}[section]
\newtheorem{rem}{\bf Remark}[section]
\newtheorem{defi}{\bf Definition}[section]
\newtheorem{ex}{\bf Example}[section]
\newtheorem{fact}{\bf Fact}[section]
\newtheorem{prop}{\bf Proposition}[section]
\newtheorem{prob}{\bf Problem}[section]

\makeatletter \@addtoreset{equation}{section}
\renewcommand{\theequation}{\thesection.\arabic{equation}}
\makeatother

\newcommand{\ds}{\displaystyle}
\newcommand{\ts}{\textstyle}
\newcommand{\ol}{\overline}
\newcommand{\wt}{\widetilde}
\newcommand{\ck}{{\cal K}}
\newcommand{\ve}{\varepsilon}
\newcommand{\vp}{\varphi}
\newcommand{\pa}{\partial}
\newcommand{\rp}{\mathbb{R}_+}
\newcommand{\hh}{\tilde{h}}
\newcommand{\HH}{\tilde{H}}
\newcommand{\cp}{{\rm cap}^+_M}
\newcommand{\hes}{\nabla^{(2)}}
\newcommand{\nn}{{\cal N}}
\newcommand{\dix}{\nabla_x\cdot}
\newcommand{\dv}{{\rm div}_v}
\newcommand{\di}{{\rm div}}
\newcommand{\pxi}{\partial_{x_i}}
\newcommand{\pmi}{\partial_{m_i}}
\newcommand{\tor}{\mathbb{T}}
\newcommand{\pot}{\mathcal{v}}

\maketitle
\begin{abstract}
For the discrete Cucker-Smale's flocking model with a singular communication weight $\psi(s)=s^{-\alpha}$, with $0<\alpha<\frac{1}{2}$, we prove that the velocity component of certain type of weak solutions is absolutly continuous. This result enables us to obtain existence and uniqeness of global solutions.
\end{abstract}

\section{Introduction}\label{s1}
Mathematical description of a collective self-driven motion of self-proppeled agents that have a tendency to allign their velocities appears in many applications including modelling of flocks of birds or schools of fish. Also some seemingly unrelated phenomena such as emergence of common languages in primitive societies, distribution of goods, optimal control over sensor networks or reaching a consensus in decision making models can be described as a collective motion of agents with a tendency to flock. In \cite{cuc1} from 2007, Cucker and Smale introduced a model for the flocking of birds, that to some extent, was based on the paper by Vicsek (\cite{vic}) from 1995 and is the subject of our interest.
In \cite{jpe} we proved existence of {\it piecewise weak solutions}, i.e. a special class of solutions to the discrete Cucker-Smale's flocking model (C-S) with a singular communication weight. The most significant property of piecewise weak solutions was that the trajectories could stick together in the sense, that two different trajectories begin to coincide at some time $t_0$ and they coincide indefinitely or at least in some time interval of a positive length. In any neighborhood of such time of sticking, a significant loss of regularity occurs and thus, the solution has to be somehow 'cut off' of such times. Here, our goal is to prove that such solutions preserve sufficient regularity also at the times of sticking, which enables us to prove existence and uniqueness in a better class of regularity.

Let us briefly introduce the model. We assume that there exist $N$ particles in $\r^d$ and that their position and velocity, denoted for $i$-th particle by $(x_i,v_i)$ are governed by the following system of ODE's
\begin{eqnarray}\label{cs}
\left\{
\begin{array}{ccl}
\frac{d}{dt}x_i &=& v_i,\\
\frac{d}{dt}v_i &=& \sum_{j=1}^N(v_j-v_i)\psi(|x_j-x_i|),
\end{array}
\right.
\end{eqnarray}
where $\psi:[0,\infty)\to[0,\infty)$ is the aforementioned {\it communication weight}. In general, the communication weight is a given nonnegative, nonincreasing function that is commonly interpreted as the perception of the particles. The most classic example of a smooth communication weight is
\begin{eqnarray}\label{cucu}
\psi_{cs}(s)=\frac{K}{(1+s^2)^\frac{\beta}{2}}, \ \ \ \beta\geq 0,\ \ \  K>0
\end{eqnarray}
introduced by Cucker and Smale in \cite{cuc1}. For such weight (or more generally -- for all bounded and Lipschitz continuous weights) C-S model was extensivly studied both from physical (see e.g.. \cite{deg1, deg2, deg3, park, top}) and mathematical (see e.g.. \cite{aha2, can, cuc2, cuc3, cuc4, dua1, hakalaru, hajekaka, hahaki, hajuslem, halele, halaruslem, haslem, mo, shen, car, car3}) point of view.  
As in case of many other particle system's governed by a Newtonian law, the microscopic description can be replaced by the mezoscopic one, in which case, C-S model is associated with the following Vlasov-type equation:
\begin{eqnarray}
\partial_tf+v\cdot\nabla f+{\rm div}_v(F(f)f)=0,\ \ x\in\r^d,\ v\in\r^d,\label{cscont}\\
F(f)(x,v,t):=\int_{\r^{2d}}\psi(|y-x|)(w-v)f(y,w,t)dwdy\label{force},
\end{eqnarray}
where $f=f(x,v,t)$ is the density of particles that at the time $t$ have position $x$ and velocity $v$. Passage from (\ref{cs}) to (\ref{cscont}) can be done via a mean-field limit and in case of regular communication weight can be found in \cite{haliu} or \cite{hatad}. For a more general overview of the passage from particle systems to continuous equations similar to (\ref{cscont}) in aggregation and swarming models via a mean-field limit we refer to \cite{rec}.

Our main interest is the C-S model with the weight
\begin{eqnarray}\label{psi}
\psi(s)=
\left\{
\begin{array}{ccc}
s^{-\alpha}&{\rm for}& s>0,\\
\infty&{\rm for}& s=0,
\end{array}
\ \ \ \ \ \ \alpha>0.\right.
\end{eqnarray}
Until now, two articles where published, that dealt with C-S model with a singular weight in two separate cases of $\alpha>1$ and $\alpha\in(0,1)$. This distinction plays an interesting role in how the problem is approached from different perpectives, ane applying unintegrability of $\psi$ for $\alpha>1$ and one -- integrability of $\psi$ for $\alpha\in(0,1)$. The first paper -- \cite{ahn1} from 2013 -- dealt with the case of $\alpha\in(1,\infty)$ and was based on a simple observation that in one dimensional case ($d=1$), with two particles, due to $\psi$ being unintegrable near $0$, no collision can occur in any finite time. In the paper \cite{ahn1}, the authors established a set of initial conditions for which the particles could not collide with $d\geq 1$ and for more than two particles. Now since the singularity of $\psi$ occurs only at $s=0$, the standard approach appropriate for the smooth communication weight is aplicable. The second article that approached the problem of singular weight was \cite{jpe} in which  the author considered $\alpha\in(0,1)$ and obtained existence of the aforementioned piecewise weak solutions making use of the fact that in such case $\psi$ is integrable in a neighborhood of $0$. We will state the results of \cite{jpe} precisely in the next section.\\
Recently a new development in this direction was presented in \cite{carchoha}, where the authors prove local well-posedness of continuous C-S model with singular weight and nonlinear dependance on velocity in the force term (\ref{force}).
\subsection{Main goal: the case of weakly singular weight}
In this paper we aim to improve the results of \cite{jpe} when $\alpha\in(0,\frac{1}{2})$. We will show that in such case for any initial data, the piecewise weak solution is absolutely continuous, unique and satisfies (\ref{cs}) in a $W^{2,1}$ weak sense (and in particular a.e.), which is significantly better than, what we were able to prove in \cite{jpe}. In the proof we use almost all results from \cite{jpe}. The improvement comes mostly from an inequality (Lemma \ref{kp}) orginating from \cite{ka1}, that enables us to show a better regularity of the solutions. This inequality is also the reason why we restrict the set of admissible $\alpha$ to $(0,\frac{1}{2})$ as for $\alpha \in (\frac{1}{2},1)$ it fails to be true and for $\alpha=\frac{1}{2}$ it just does not suffice.\\
Uniqueness of solutions to C-S model with singular weight along with the proven in \cite{jpe} possibility of sticking of the trajectories leads to an interesting phenomenon. Namely, we begin with what one could consider a standard ODE of the form (\ref{cs}) with regular weight. In particular solutions are unique and as expected the backwards in time problem for this ODE is also well possed. However as soon as we change $\psi$ to be of the form (\ref{psi}) with $\alpha\in(0,1)$, we end up with an ODE, with uniqueness of the solutions but due to sticking of the trajecories, without a well possed backwards in time problem. We aim to further study this phenomenon in the future.

\section{Preliminaries and notation}
In section \ref{reg} we prove the absolute continuity of the solutions to the discrete C-S model with a communication weight given by (\ref{psi}) and $\alpha\in(0,\frac{1}{2})$. The reasoning is based on our results from \cite{jpe} and the inequality from \cite{ka1}. In section \ref{uni} we prove the uniqueness.\\
Hereinafter $x=(x_1,...,x_N)\in\r^{Nd}$, where $x_i=(x_{i,1},...,x_{i,d})$ denotes the position of the particles, $v=\dot{x}$ is their velocity, while $N$ and $d$ are respectively the number of the particles and the dimension of the space. Moreover by $B_i(t)$ we will denote the set of all indexes $j$, such that up to the time $t$, the trajectory of $x_j$ does not coincide with the trajectory of $x_i$. Assuming that the trajectories, once coinciding cannot separate, we may define it as
\begin{eqnarray}\label{bi}
B_i(t):=\{k=1,...,N: x_k(t)\neq x_i(t)\ {\rm or}\ v_k(t)\neq v_i(t)\},
\end{eqnarray}
since any two particles with sufficiently smooth trajectories have the same position and velocity at the time $t$, if and only if they move on the same trajectory. Further, by $W^{k,p}(\Omega)$ we denote the Sobolev space of the functions with up to $k$-th weak derivative belonging to the space $L^p(\Omega)$.\\
We will say that particles $x_i$ and $x_j$ {\it collide} at the time $t$ if and only if $x_i(t)=x_j(t)$ and $v_i(t)\neq v_j(t)$ and we will say that they {\it stick together} at the time $t$ if and only if $x_i(t)=x_j(t)$ and $v_i(t)=v_j(t)$. Throughout the paper $C$ denotes a generic positive constant that may change from line to line even in the same inequality.\\
Here we present a summary from paper \cite{jpe}. We precisely state the definition of piecewise weak solutions and in Proposition \ref{sum} we state most of the results obtained throughout \cite{jpe}.
\begin{defi}\label{sol}
Let $0=T_0\leq T_1\leq ... \leq T_{N_s}$, be the set of all times of sticking and $T_{N_s+1}:=T$ be given positive number. For $n\in \{0,...,K\}$, on each interval $[T_n,T_{n+1}]$, we consider the problem
\begin{eqnarray}\label{csref}
\left\{
\begin{array}{lll}
\frac{dx_i}{dt}=v_i,\\
\frac{dv_i}{dt}=\frac{1}{N}\sum_{k\in B_i(T_n)}(v_k-v_i)\psi_n(|x_k-x_i|),\\
x_i\equiv x_j \ \ \ {\rm if}\ \ \ j\notin B_i(T_n)
\end{array}
\right.
\end{eqnarray}
for $t\in[T_n,T_{n+1}]$, with initial data $x(T_n),v(T_n)$.\\
We say that $x$ solves (\ref{csref}) on the time interval $[0,T]$, with weight given by (\ref{psi}) and arbitrary initial data $x(0)=x_0, v(0)=v_0$ if and only if for all $n=0,...,K$ and arbitrarly small $\epsilon>0$, the function $x\in (C^1([0,T]))^{Nd}$ is a weak in $(W^{2,1}([T_{n},T_{n+1}-\epsilon]))^{Nd}$ solution of (\ref{cs}).
\end{defi}

\begin{rem}\rm
In Definition \ref{sol} the purpose of redefining of the system (\ref{csref}) at each time of sticking $T_n$ (by including the set $B_i(T_n)$) is to ensure that once stuck together trajectories cannot separate. In this paper we prove existence and uniqueness of $W^{2,1}$ solutions to (\ref{cs}) and for such solutions trajectories cannot separate anyway and thus there is no need to redefine the system as in Definition \ref{sol}. However we will also prove that the solutions in the sense of Definition \ref{sol} are also unique (whether they belong to $W^{2,1}$ or not -- see Theorem \ref{main2}) and in this case sets $B_i(T_n)$ are crucial.
\end{rem}

\begin{prop}[Summary]\label{sum}
Let $\alpha\in(0,1)$. For all initial data $x_0,v_0$, there exists at least one solution of Cucker-Smale's flocking model with a singular communication weight given by (\ref{psi}). This solution exists in the sense of Definition \ref{sol}. Moreover the following properties hold:
\begin{enumerate}
\item For all $\epsilon>0$, the function $v$ is absolutely continuous on each time interval $[T_n,T_{n+1}-\epsilon]$.
\item The set of times of collision is at most countable, while the set of times of sticking has at most $N$ elements. Moreover if there exists a point of density of the times of collision (which we do not know whether exists or not) then this point itself is a time of sticking. Thus there are at most $N$ points of density of the times of collision.
\item Both $x$ and $v$ are uniformly bounded i.e. there exists an $N$ independent constant $C_1$ such that
\begin{eqnarray*}
\max_{i=1,...,N}\sup_{t\in[0,T]}|x_i(t)|\leq TC_1,\\
\max_{i=1,...,N}\sup_{t\in[0,T]}|v_i(t)|\leq C_1.
\end{eqnarray*}
\end{enumerate}
\end{prop}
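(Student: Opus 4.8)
The plan is to recognize that Proposition \ref{sum} merely collects the main conclusions of \cite{jpe}, so the proof amounts to assembling the constructions and estimates established there; I sketch the mechanism behind each item. Existence of a solution in the sense of Definition \ref{sol} is obtained by regularization: one replaces the singular weight (\ref{psi}) by a bounded, Lipschitz truncation $\psi_\delta$ (for instance $\psi_\delta(s)=\min\{\psi(s),\delta^{-\alpha}\}$ or $\psi(\max\{s,\delta\})$), solves the resulting classical Cucker--Smale system by standard ODE theory, and then lets $\delta\to 0$. The structural feature that makes the limit passage work for $\alpha\in(0,1)$ is that $\psi$ is integrable in a neighborhood of $0$; this is what keeps the force term under control as two particles approach a collision and lets one extract, via Arzel\`a--Ascoli, a limit that is $C^1$ in $x$ and solves (\ref{cs}) weakly between consecutive sticking times.

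For item 3 I would first establish the velocity bound, which is the one genuinely robust a priori estimate and is independent of $\delta$ (hence survives the limit) and of $N$. The symmetry of the interaction forces the velocities to remain in the convex hull of the initial velocities: for any fixed direction $e$, an index $i^{*}$ realizing $\max_i v_i\cdot e$ satisfies $\frac{d}{dt}(v_{i^{*}}\cdot e)=\frac1N\sum_k (v_k-v_{i^{*}})\cdot e\,\psi(|x_k-x_{i^{*}}|)\le 0$, since every summand is nonpositive. Hence $\max_i|v_i(t)|\le\max_i|v_i(0)|=:C_1$, and integrating $\dot x_i=v_i$ gives $|x_i(t)|\le|x_i(0)|+tC_1$, which is the asserted position bound for a suitable $N$-independent constant. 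Item 1 then follows from the same integrability: away from the sticking time $T_{n+1}$ (which is why one trims by $\epsilon$), the only loss of smoothness comes from transversal collisions, near which $|x_k-x_i(t)|$ vanishes linearly, so $\psi(|x_k-x_i(t)|)\sim c|t-t_0|^{-\alpha}$ is time-integrable for $\alpha<1$; thus $\dot v\in L^1([T_n,T_{n+1}-\epsilon])$, i.e.\ $v\in W^{1,1}$ is absolutely continuous there.

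Item 2 is essentially combinatorial together with a transversality observation. Each sticking event irreversibly merges at least two formerly distinct trajectories (they cannot re-separate, which is enforced by the redefinition of the system in (\ref{csref}) through the sets $B_i$), so the number of distinct trajectories strictly decreases at every sticking time; starting from at most $N$ of them bounds the number of sticking times by $N$. Between sticking times the trajectories are $C^1$ with non-vanishing relative velocity at a genuine collision, so for each pair the set $\{t: x_i(t)=x_j(t),\ v_i(t)\neq v_j(t)\}$ consists of isolated points and is therefore at most countable on each compact subinterval; any accumulation point (point of density) of collision times must be a point where relative velocities also vanish, i.e.\ a sticking time, which yields the final claim.

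The step I expect to be the main obstacle is the limit passage in the regularization together with the correct treatment of sticking: one must show that the uniform estimates prevent the regularized forces from concentrating as $\delta\to0$ even as particles cluster, and that the limiting dynamics genuinely freezes coincident particles together rather than allowing spurious separation. Controlling the possible accumulation of collision times near a sticking time, and verifying that such accumulation can occur only at a sticking time, is the most delicate point, and it is precisely where the integrability of $\psi$ for $\alpha<1$ is used in an essential way.
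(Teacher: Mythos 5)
Your proposal matches the paper's proof in substance: the paper's entire proof of Proposition \ref{sum} is a citation to \cite{jpe} (existence from its Theorem 2.1, item 1 from its Proposition 2.5, item 2 from its Theorem 2.1, and item 3 from its Corollary 2.2), which is exactly what your opening sentence recognizes. Your sketches of the underlying mechanisms (regularization of the weight and limit passage using integrability of $\psi$ for $\alpha\in(0,1)$, the convex-hull maximum-principle bound for the velocities, linear vanishing of $|x_j-x_i|$ near transversal collisions giving $L^1$ control of $\dot v$, and the merging/transversality argument bounding sticking times by $N$ and forcing accumulation points of collisions to be sticking times) are consistent with how those results are established in the cited work, so there is no gap.
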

\begin{proof}
The proof can be found in \cite{jpe}. Existence of solutions in the sense of Definition \ref{sol} is the subject of Theorem 2.1, point 1. is proved in Proposition 2.5, point 2. in Theorem 2.1 and point 3. in Corollary 2.2.
\end{proof}
\begin{rem}\rm
There is one seemingly significant difference between statement of the definition of the solution from \cite{jpe} and Definition \ref{sol}. Namely, in \cite{jpe} weak existence is stated to hold between times of collision instead of times of sticking (and there are significantly more times of collisions than times of sticking, which would suggest that the definition from \cite{jpe} was weaker). However it can be improved in a straightforward manner, since by Proposition 2.5 from \cite{jpe} the solution is absolutely continuous except only for left sided neighborhoods of times of sticking, which means that in fact we have weak existence as stated in Definition \ref{sol}.
\end{rem}

\section{Main result}
The main goal of this paper is presented in the form of the following theorems. The first theorem states that for $0<\alpha<\frac{1}{2}$ there exists a unique solution to (\ref{cs}) and it is reasonably regular.
\begin{theo}\label{main}
Let $\alpha\in(0,\frac{1}{2})$ be given. Then for all $T>0$ and arbitrary initial data there exists a unique $x\in W^{2,1}([0,T])\subset C^1([0,T])$ that solves (\ref{cs}) with communication weight given by (\ref{psi}) weakly in $W^{2,1}([0,T])$.
\end{theo}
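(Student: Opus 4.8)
The plan is to deduce Theorem \ref{main} from three facts: the existence of a piecewise weak solution provided by Proposition \ref{sum}, an upgrade of its regularity to $W^{2,1}([0,T])$ across the finitely many times of sticking, and uniqueness. The first is free of charge: Proposition \ref{sum} yields $x\in(C^1([0,T]))^{Nd}$ solving (\ref{cs}) weakly on each interval $[T_n,T_{n+1}-\epsilon]$, with velocity $v=\dot x$ absolutely continuous away from the sticking times $T_1,\dots,T_{N_s}$ (of which there are at most $N$). Since $x\in C^1$ forces $v$ to be continuous everywhere, the whole matter reduces to a local statement near each sticking time $T_*$: if the force $\dot v_i=\sum_{k}(v_k-v_i)\,\psi(|x_k-x_i|)$ is integrable up to $T_*$, then by the fundamental theorem of calculus (valid on each $[T_n,T_{n+1}-\epsilon]$ and extended to $T_*$ by dominated convergence) $v$ is absolutely continuous on all of $[0,T]$, hence $x\in W^{2,1}([0,T])\subset C^1([0,T])$ and (\ref{cs}) holds almost everywhere.

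Rather than estimate the force in $L^1$ directly---which is awkward because of the collisions accumulating at $T_*$ and the coupling between pairs---I would prove the stronger statement $\dot v\in L^2([0,T])$, which on a bounded interval implies $\dot v\in L^1$. Writing $w_{ik}=|v_i-v_k|$ and $d_{ik}=|x_i-x_k|$, the Cauchy--Schwarz inequality applied to the finite sum defining the force gives $|\dot v_i|^2\le C\sum_k w_{ik}^2\,d_{ik}^{-2\alpha}$, so everything reduces to proving
\[
\int_0^T w_{ik}^2\,d_{ik}^{-2\alpha}\,dt<\infty .
\]
The classical energy identity for (\ref{cs}) (total momentum is conserved and $\frac12\sum_i|v_i|^2$ dissipates at a rate proportional to $\sum_{i,k}w_{ik}^2\,\psi(d_{ik})$) furnishes, after integrating over $[T_n,T_{n+1}-\epsilon]$ and letting $\epsilon\to0$, only the weaker bound $\int_0^T w_{ik}^2\,d_{ik}^{-\alpha}\,dt<\infty$; the missing factor $d_{ik}^{-\alpha}$ is exactly what Lemma \ref{kp} is designed to provide. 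This is the crux and the main obstacle. Its mechanism is sharp at $\alpha=\tfrac12$: near an isolated collision time $t_0$ one has $w_{ik}\to w_{ik}(t_0)>0$ and $d_{ik}(t)\sim|t-t_0|$, so $w_{ik}^2\,d_{ik}^{-2\alpha}\sim|t-t_0|^{-2\alpha}$ is integrable precisely when $2\alpha<1$, and the boundary term of the form $d_{ik}^{1-2\alpha}$ produced by the Kato-type integration underlying Lemma \ref{kp} remains bounded as $d_{ik}\to0$ only for $\alpha<\tfrac12$ (it diverges, logarithmically at $\alpha=\tfrac12$ and polynomially for $\alpha>\tfrac12$). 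Summing the contributions of the at most countably many collisions accumulating at $T_*$ (point 2 of Proposition \ref{sum}) then yields the displayed bound and hence $\dot v\in L^2$.

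For uniqueness I would take two $W^{2,1}$ solutions $x,\bar x$ with the same initial data and run a Gronwall argument for the Lyapunov functional $L(t)=\sum_i\bigl(|x_i-\bar x_i|+|v_i-\bar v_i|\bigr)$, differentiated in the a.e.\ (Kato) sense. The alignment factors $(v_k-v_i)$ are Lipschitz and harmless; the only singular contribution comes from $\psi(d_{ik})-\psi(\bar d_{ik})$, which I would control using the monotonicity of $\psi$ together with the mean-value bound $|\psi(a)-\psi(b)|\le\alpha\min(a,b)^{-\alpha-1}|a-b|$ and the estimate $\bigl|\,|x_i-x_k|-|\bar x_i-\bar x_k|\,\bigr|\le|x_i-\bar x_i|+|x_k-\bar x_k|$. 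This bounds the singular term by $L(t)$ times a coefficient that is singular in time but whose time integral is finite by the regularity established above (again through Lemma \ref{kp} and $\alpha<\tfrac12$), giving $\tfrac{d}{dt}L\le g(t)\,L$ with $g\in L^1([0,T])$; since $L(0)=0$, Gronwall's lemma forces $L\equiv0$. In particular coincident trajectories cannot separate, which reconciles the $W^{2,1}$ solution with Definition \ref{sol}. Combining existence, the regularity upgrade and this uniqueness completes the proof of Theorem \ref{main}.
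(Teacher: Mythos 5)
Your overall skeleton --- existence from Proposition \ref{sum}, a regularity upgrade across the sticking times via the Kato-type inequality of Lemma \ref{lemkp}, then Gronwall uniqueness --- is the same as the paper's, and you correctly identify why $\alpha<\frac12$ enters (the exponent $\theta$ in the Kato inequality, here $\approx 2\alpha$, must be $<1$). However, both of your main steps have genuine gaps.

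First, the regularity step. As you set it up, the application of Lemma \ref{lemkp} is circular: with $f=x_j-x_i$, the right-hand side of (\ref{kp}) is $C_2\int|\dot v_j-\dot v_i|\,dt$ plus boundary terms, i.e.\ exactly the $L^1$ norm of the acceleration whose finiteness across a sticking time $T_*$ is the thing to be proved (on $[T_n,T_{n+1}-\epsilon]$ it is finite, but not known to be bounded uniformly in $\epsilon$). The paper defuses this by a Young splitting $|v_j-v_i|\psi=\left(|v_j-v_i|^{2\delta}\psi^{\delta}\right)\cdot\left(|v_j-v_i|^{1-2\delta}\psi^{1-\delta}\right)$, so that the Kato term enters with an arbitrarily small factor $\eta$ and is absorbed into the left-hand side $\frac1N\sum_i\int|\dot v_i|\,dt$, yielding bounds uniform in $\epsilon$; the other factor is controlled by the dissipation bound of Lemma \ref{lem1}. (In your $L^2$ formulation a quadratic-versus-linear absorption, $\|\dot v\|_{L^2}^2\le C\sqrt{T}\,\|\dot v\|_{L^2}+C$, could play the same role, but you never perform any absorption.) What you actually offer in its place --- the local expansion $w_{ik}\to w_{ik}(t_0)>0$, $d_{ik}(t)\sim|t-t_0|$ --- is valid only at isolated collisions and fails precisely at sticking times, where $w_{ik}(T_*)=0$, no linear lower bound on $d_{ik}$ holds, and the collision times may accumulate; moreover a countable sum of individually finite contributions need not be finite. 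Since away from sticking times absolute continuity was already known from \cite{jpe}, your heuristic addresses only the part of the problem that was never in question; it is, in essence, the paper's Lemma \ref{lem3}, whose role there is merely to verify hypothesis (\ref{kpzal}) on sticking-free intervals.

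Second, the uniqueness step fails as written. From the mean-value bound, your singular Gronwall coefficient is $\min(d_{ik},\bar d_{ik})^{-1-\alpha}$, and near a collision time (which must be handled, since arbitrary initial data, including colliding configurations, are allowed) one has $d_{ik}(t)\sim|t-t_0|$, so this coefficient behaves like $|t-t_0|^{-1-\alpha}\notin L^1$. Your claim that its integrability follows from the regularity established above, ``again through Lemma \ref{kp}'', is incorrect: Lemma \ref{lemkp} and Lemma \ref{lem3} control negative powers $d^{-\theta}$ only for $\theta<1$, never $d^{-1-\alpha}$. The paper's proof survives precisely because it does not estimate $\bigl||x^1_j-x^1_i|-|x^2_j-x^2_i|\bigr|$ by the Gronwall functional alone: working with the velocity-only functional $r=\sum_i|v_i^1-v_i^2|^2$ and using that both solutions share the same initial data, it establishes $|(x_j^1-x_i^1)-(x_j^2-x_i^2)|\le 2t\sqrt{r_{inc}(t)}$ (inequality (\ref{st23})); the explicit factor $t$ cancels one power of the singularity $t^{-1-\alpha}$ from (\ref{st22}) and leaves the integrable weight $t^{-\alpha}$. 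That cancellation, the case analysis at the starting time (separated pairs, colliding non-stuck pairs, stuck pairs via Lemma \ref{stuck}), and the continuation argument across collision times in Step 4 constitute the substance of the paper's uniqueness proof and are absent from yours.
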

The second theorem states that for $0<\alpha<1$ the piecewise weak solutions considered in \cite{jpe} are unique, even though they lack the $W^{2,1}([0,T])$ regularity.
\begin{theo}\label{main2}
Let $\alpha\in(0,1)$ be given. Then the solution in the sense of Definition \ref{sol}, which existence is ensured by Proposition \ref{sum} is unique.
\end{theo}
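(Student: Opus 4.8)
The plan is to prove uniqueness of the piecewise weak solutions from Definition \ref{sol} for $\alpha\in(0,1)$ by a Gr\"onwall-type argument carried out piece by piece on the intervals $[T_n,T_{n+1}]$ between successive times of sticking. Since by Proposition \ref{sum} there are at most $N$ times of sticking, and on each open subinterval (away from left-sided neighborhoods of sticking times) the velocity is absolutely continuous, it suffices to establish uniqueness on each such interval and then glue. Suppose $(x,v)$ and $(\bar x,\bar v)$ are two solutions in the sense of Definition \ref{sol} with the same initial data. I would set $d_x(t):=\sum_{i=1}^N|x_i(t)-\bar x_i(t)|$ and $d_v(t):=\sum_{i=1}^N|v_i(t)-\bar v_i(t)|$ and try to derive a differential inequality bounding $\frac{d}{dt}(d_x+d_v)$ in terms of $d_x+d_v$, so that Gr\"onwall's lemma forces the two solutions to coincide whenever they agree at the left endpoint $T_n$.

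The core estimate is to control the difference of the two force terms. Writing the right-hand sides of $\frac{d}{dt}v_i$ and $\frac{d}{dt}\bar v_i$, the difference splits into a term involving $(v_k-v_i)-(\bar v_k-\bar v_i)$ multiplied by $\psi(|x_k-x_i|)$, and a term involving $(\bar v_k-\bar v_i)$ multiplied by $\psi(|x_k-x_i|)-\psi(|\bar x_k-\bar x_i|)$. The first of these is harmless: it is bounded by $\psi(|x_k-x_i|)\,d_v$, and although $\psi$ can be large near collisions, one expects to absorb it. The dangerous piece is the second, the difference of the singular weights. Here I would exploit that the velocities are bounded (Proposition \ref{sum}, point 3), so $|\bar v_k-\bar v_i|\le C$, and then the entire difficulty concentrates on estimating $|\psi(|x_k-x_i|)-\psi(|\bar x_k-\bar x_i|)|$ in terms of the position difference $d_x$, uniformly in time even as trajectories approach each other.

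The main obstacle is precisely the singularity of $\psi(s)=s^{-\alpha}$ at $s=0$: the map is not Lipschitz near the origin, so the naive mean-value bound $|\psi(a)-\psi(b)|\le \sup|\psi'|\,|a-b|$ blows up near collisions and sticking. To handle this I would look for a one-sided or integrated control that remains finite through collision times. The natural tool is the integrability of $\psi$ for $\alpha<1$, which underlies all of \cite{jpe}: one can bound $\int |\psi(|x_k-x_i|)-\psi(|\bar x_k-\bar x_i|)|\,dt$ over an interval by comparing the two collision structures, using that the set of collision times is at most countable and that near each collision the relevant trajectory differences are controlled by the already-established regularity. A cleaner route is to define an auxiliary quantity that is monotone through collisions — for instance working with the cumulative interaction $\int_{T_n}^t \psi(|x_k-x_i|)\,ds$, which is finite by the $L^1$ integrability of $\psi$ along solutions — and to show that the difference of solutions satisfies a Gr\"onwall inequality with this integrable kernel in place of a bounded one. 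Once such an estimate is in hand, Gr\"onwall with an $L^1$ (rather than $L^\infty$) coefficient still yields $d_x+d_v\equiv 0$ on $[T_n,T_{n+1}]$, and induction over the finitely many sticking intervals, together with the sticking convention $x_i\equiv x_j$ for $j\notin B_i(T_n)$ that prevents trajectories from re-separating, completes the uniqueness across all of $[0,T]$.
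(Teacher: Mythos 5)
Your proposal correctly isolates the crux --- controlling the difference $|\psi(|x_k-x_i|)-\psi(|\bar x_k-\bar x_i|)|$ of singular weights --- but neither of the two devices you offer actually resolves it, and the idea that closes the estimate in the paper is missing. First, a finite bound on $\int|\psi(|x_k-x_i|)-\psi(|\bar x_k-\bar x_i|)|\,dt$ is not of Gr\"onwall form: you need the force difference to be dominated by (integrable kernel)$\,\times\,(d_x+d_v)$, not merely to have finite integral; moreover ``comparing the two collision structures'' is circular, since before uniqueness is proved the two solutions may well have different collision sets. Second, your ``cleaner route'' with the cumulative kernel $\psi(|x_k-x_i|)=|x_k-x_i|^{-\alpha}$ is not what the estimate produces: the mean value theorem gives $|\psi(a)-\psi(b)|\le \alpha\,\min(a,b)^{-1-\alpha}|a-b|$, so the natural kernel is $|x_k-x_i|^{-1-\alpha}$, which behaves like $|t-t_c|^{-1-\alpha}$ near a collision time $t_c$ and is \emph{not} integrable for any $\alpha>0$. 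The integrability of $\psi$ itself for $\alpha<1$ does not repair this.

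What rescues the argument in the paper (Proposition \ref{un1}, Step 2, adapted almost verbatim to Definition \ref{sol} in the proof of Theorem \ref{main2}) is a cancellation your proposal never invokes: the Gr\"onwall argument is run \emph{locally}, restarted at a time where the two solutions coincide, so that the difference of positions itself vanishes linearly there, $|(x_j^1-x_i^1)-(x_j^2-x_i^2)|\le 2t\sqrt{r_{inc}(t)}$ (inequality (\ref{st23})), while the non-sticking hypothesis gives the lower bound $|x_j^m-x_i^m|\ge \tfrac{\delta}{2}\,t$ near a collision with distinct velocities. Multiplying the two, the factor $t$ cancels one power of the singularity, $t^{-1-\alpha}\cdot t = t^{-\alpha}$, which is integrable precisely because $\alpha<1$ (inequality (\ref{st22})); the resulting kernel $\max\{2L(\delta)t,\,2C(\delta)t^{-\alpha}\}$ is what makes Gr\"onwall applicable. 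This mechanism forces the architecture of the paper's proof: a case split at the starting time (no collision / collision without sticking / sticking, the last handled for Definition \ref{sol} solutions by the sets $B_i$ rather than Lemma \ref{stuck}, as you correctly note), local uniqueness on a short interval, and then a continuation argument (Step 4) that uses continuity of $(x,v)$ to reach each collision time and restarts the local argument there. Your single-pass Gr\"onwall on all of $[T_n,T_{n+1}]$ cannot work as stated, because the linear vanishing of the solution difference --- the ingredient that tames $t^{-1-\alpha}$ --- is only available relative to a restart time at which the two solutions already agree.
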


In case of $\alpha\in(0,\frac{1}{2})$, by Proposition \ref{sum} it suffices to prove uniqueness and that $v\in W^{1,1}([0,T])$ (i.e. that $v$ is absolutely continuous). In case of $\alpha\in(0,1)$ we only need to prove uniqueness. We do it in the subsequent sections.

\subsection{Absolute continuity of the velocity}\label{reg}
In this section we prove the absolute continuity of $v$. First let us state it in an explicit way.
\begin{prop}\label{abs}
With the assumptions of Theorem \ref{main}, there exists a constant $M$ depending only on the initial data, such that
\begin{eqnarray*}
\frac{1}{N}\sum_{i=1}^N\int_0^T|\dot{v}_i(t)|dt\leq M.
\end{eqnarray*}
Thus $v$ belong to the space $W^{1,1}([0,T])$ and is absolutely continuous.
\end{prop}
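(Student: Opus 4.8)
The plan is to start from the pointwise reduction
\[
\frac{1}{N}\sum_{i=1}^N\int_0^T|\dot v_i(t)|\,dt \le \frac{1}{N^2}\sum_{i,k=1}^N\int_0^T |v_i-v_k|\,\psi(|x_i-x_k|)\,dt,
\]
which follows at once from the equation $\dot v_i=\frac1N\sum_k(v_k-v_i)\psi(|x_k-x_i|)$ and the triangle inequality. Since by Proposition \ref{sum} the velocities are uniformly bounded, $|v_i-v_k|\le 2C_1$, the whole estimate reduces to controlling, for each fixed pair $(i,k)$, the single scalar integral
\[
I_{ik}:=\int_0^T |x_i(t)-x_k(t)|^{-\alpha}\,dt = \int_0^T\psi(|x_i(t)-x_k(t)|)\,dt,
\]
and to showing $I_{ik}\le C$ with $C$ depending only on the initial configuration (the fixed parameters $N$, $T$, $\alpha$ being allowed). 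This would give $M=\frac{2C_1}{N^2}\sum_{i,k}I_{ik}$.

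Next I would set $d_{ik}(t):=|x_i(t)-x_k(t)|$ and localise the analysis of $I_{ik}$ around the times where $d_{ik}\to 0$, since elsewhere the integrand is bounded. By Proposition \ref{sum} these split into at most countably many collision times, whose only possible density points lie among the at most $N$ sticking times. Near an isolated transversal collision at $t_0$ one has $v_i(t_0)\ne v_k(t_0)$, so the radial speed $\dot d_{ik}$ is bounded away from $0$ and $d_{ik}(t)\sim c|t-t_0|$; the local contribution behaves like $\int|t-t_0|^{-\alpha}$ and is finite for every $\alpha<1$. The delicate region is a left neighbourhood of a sticking time $t_s$, where \emph{both} $d_{ik}\to0$ and $v_i-v_k\to0$, so heuristically the distance touches zero to second order, $d_{ik}(t)\sim c(t_s-t)^2$, whence $d_{ik}^{-\alpha}\sim c(t_s-t)^{-2\alpha}$ and integrability forces exactly $\alpha<\tfrac12$. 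Making this heuristic rigorous and quantitative is precisely the role of the Kato-type inequality of Lemma \ref{kp}.

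Concretely, I would apply Lemma \ref{kp} to extract a one-sided differential estimate for $d_{ik}$ (or for a suitable power such as $d_{ik}^{1-\alpha}$), yielding a lower bound of the schematic form $d_{ik}(t)\ge c\,\mathrm{dist}(t,\{\text{sticking times}\})^2$ near each sticking point, together with control on how the countable family of collisions accumulates there; summing the local estimates over the finitely many sticking times and the countable collisions then gives $I_{ik}\le C$. As a sharper alternative one can keep the factor $|v_i-v_k|$ and split it by Cauchy--Schwarz as $\big(\int_0^T\psi|v_i-v_k|^2\big)^{1/2}\big(\int_0^T\psi\big)^{1/2}$, bounding the first factor through the energy dissipation identity $\frac{1}{2N}\int_0^T\sum_{i,k}|v_i-v_k|^2\psi(d_{ik})\,dt=\frac12\sum_i\big(|v_i(0)|^2-|v_i(T)|^2\big)$; this again leaves $\int_0^T\psi=I_{ik}$ as the decisive quantity.

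The main obstacle is exactly this last step: the uniform control of $I_{ik}$ near the sticking times. Boundedness of the velocities alone is not enough, since a priori $d_{ik}$ might collapse to zero faster than quadratically, or the collisions might accumulate too violently to sum. It is the inequality of Lemma \ref{kp}, valid only for $\alpha<\tfrac12$ and merely critical (hence insufficient) at $\alpha=\tfrac12$, that pins down the quadratic rate of approach at sticking and tames the accumulation, thereby closing the estimate and forcing the restriction $\alpha\in(0,\tfrac12)$.
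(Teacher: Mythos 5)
Your first step coincides with the paper's, but the reduction that follows it is fatal. You bound $|v_i-v_k|\le 2C_1$ and claim it suffices to show $I_{ik}=\int_0^T\psi(|x_i-x_k|)\,dt\le C$. That integral is in general \emph{infinite} once sticking occurs, so no argument can close this step. Already for two particles on a line, writing $d=|x_1-x_2|$, the relative dynamics gives $\ddot d=-2\dot d\,\psi(d)$, hence $\dot d+\tfrac{2}{1-\alpha}d^{1-\alpha}$ is conserved; at a sticking time $t_s$ both $d$ and $\dot d$ vanish, so the constant equals $0$ and $d(t)=\bigl(\tfrac{2\alpha}{1-\alpha}(t_s-t)\bigr)^{1/\alpha}$. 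Thus the distance vanishes to order $1/\alpha>2$, not to second order as your heuristic asserts; your claimed lower bound $d_{ik}(t)\ge c\,\mathrm{dist}(t,\{\text{sticking times}\})^2$ is false (the true rate is \emph{faster} than quadratic precisely when $\alpha<\tfrac12$), and $\psi(d(t))\sim c\,(t_s-t)^{-1}$ is non-integrable for \emph{every} $\alpha\in(0,1)$. Moreover, past $t_s$ the stuck particles have $d\equiv 0$ and $\psi(0)=\infty$, so $I_{ik}$ is infinite for this reason as well. The same objection kills your Cauchy--Schwarz variant, which again leaves the uncontrollable factor $\bigl(\int_0^T\psi\,dt\bigr)^{1/2}$.

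The factor $|v_i-v_k|$ you discarded is exactly what saves the integral: near sticking $|v_i-v_k|=|\dot d|\sim c\,d^{1-\alpha}\sim c\,(t_s-t)^{(1-\alpha)/\alpha}$, so $|v_i-v_k|\,\psi(d)\sim c\,(t_s-t)^{(1-2\alpha)/\alpha}$, which is integrable precisely because $\alpha<\tfrac12$. This is how the paper proceeds: it never estimates $\int\psi\,dt$ alone, but splits $|v_j-v_i|\,\psi=\bigl(|v_j-v_i|^{2\delta}\psi^{\delta}\bigr)\cdot\bigl(|v_j-v_i|^{1-2\delta}\psi^{1-\delta}\bigr)$ by Young's inequality; the first resulting term is controlled through the dissipation identity of Lemma \ref{lem1}, $\int_0^T\sum_{i,j}|v_i-v_j|^2\psi\,dt\le N^2C_1^2$, and the second is $\eta\int|v_j-v_i|^2\psi^{\frac{2-2\delta}{1-2\delta}}\,dt$, to which Lemma \ref{lemkp} applies with $f=x_j-x_i$, $h(\lambda)=\lambda^{-\theta}$, $\theta=\frac{2-2\delta}{1-2\delta}\alpha<1$. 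Note that the lemma estimates $\int|f'|^2h(|f|)\,dt$ --- with the squared velocity difference inside the integrand --- and not $\int h(|f|)\,dt$; it produces $C_2\int|\dot v_j-\dot v_i|\,dt$ plus boundary terms, and the $\dot v$ contribution is absorbed into the left-hand side by taking $\eta$ small. The restriction $\alpha<\tfrac12$ enters so that the exponent boosted by Young's inequality still satisfies $\theta<1$, not through any quadratic-touching bound. Finally, Lemma \ref{lem3} (finiteness of $\int|x_j-x_i|^{-\theta}\,dt$ on intervals without sticking) serves only to verify the qualitative hypothesis (\ref{kpzal}) of Lemma \ref{lemkp}; it yields no uniform constant and, by the computation above, no such uniform bound on $\int\psi\,dt$ can exist.
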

To prove the above Proposition we require the two presented below technical lemmas and the interpolation inequality from Appendix \ref{app}. The proofs of Lemmas \ref{lem1} and \ref{lem3} can be found at the end of the section.
\begin{lem}\label{lem1}
With the assumptions of Theorem \ref{main}, the function
\begin{eqnarray*}
R(t):=\sum_{i,j=1}^N|v_i(t)-v_j(t)|^2\psi(|x_i(t)-x_j(t)|)
\end{eqnarray*}
is integrable and
\begin{eqnarray*}
\int_0^TR(t)dt\leq N^2C_1^2,
\end{eqnarray*}
where $C_1$ is the constant from Proposition \ref{sum}.3.
\end{lem}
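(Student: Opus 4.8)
The plan is to identify $R$, up to a multiplicative constant, with the dissipation rate of the total kinetic energy $E(t):=\sum_{i=1}^N|v_i(t)|^2$, and then to read off the bound by integrating the resulting dissipation identity and telescoping across the finitely many sticking times. The only analytic input needed is the absolute continuity of $v$ on intervals bounded away from sticking times, which is exactly Proposition \ref{sum}.1, together with the uniform velocity bound of Proposition \ref{sum}.3.

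First I would fix one interval $[T_n,T_{n+1}]$ on which the system takes the form \eqref{csref} and work on $[T_n,T_{n+1}-\epsilon]$. There $v$ is absolutely continuous by Proposition \ref{sum}.1, so $\dot v_i\in L^1$, the function $E$ is absolutely continuous, and for a.e.\ $t$
\[
\frac{d}{dt}E(t)=\frac{2}{N}\sum_{i=1}^N\sum_{k\in B_i(T_n)}v_i\cdot(v_k-v_i)\,\psi(|x_k-x_i|).
\]
The crucial algebraic step is symmetrization: since $\psi(|x_k-x_i|)=\psi(|x_i-x_k|)$ and the relation $k\in B_i(T_n)$ is symmetric in $i,k$, swapping the summation indices and averaging replaces the bilinear expression $v_i\cdot(v_k-v_i)$ by $-\tfrac12|v_i-v_k|^2$, giving
\[
\frac{d}{dt}E(t)=-\frac1N\sum_{i=1}^N\sum_{k\in B_i(T_n)}|v_i-v_k|^2\,\psi(|x_k-x_i|)=-\frac1N R(t)\quad\text{a.e. on }[T_n,T_{n+1}-\epsilon].
\]
The last equality uses that pairs with $k\notin B_i(T_n)$ have stuck together, so $v_k\equiv v_i$ and they contribute $0$; the formally indeterminate $0\cdot\infty$ terms in the definition of $R$ are therefore read as $0$, and the restricted sum equals $R$. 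In particular $R=-2N\sum_i v_i\cdot\dot v_i\in L^1([T_n,T_{n+1}-\epsilon])$ because $v$ is bounded and $\dot v\in L^1$ there.

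Next I would integrate this identity over $[T_n,T_{n+1}-\epsilon]$; the fundamental theorem of calculus is legitimate by absolute continuity, yielding $\frac1N\int_{T_n}^{T_{n+1}-\epsilon}R\,dt=E(T_n)-E(T_{n+1}-\epsilon)$. Since $x\in C^1$ the velocity $v$ is continuous, hence $E(T_{n+1}-\epsilon)\to E(T_{n+1})$ as $\epsilon\downarrow0$; as $R\ge0$, monotone convergence provides simultaneously the integrability of $R$ on all of $[T_n,T_{n+1}]$ and the identity $\frac1N\int_{T_n}^{T_{n+1}}R\,dt=E(T_n)-E(T_{n+1})$. Summing over the sticking intervals, of which there are at most $N$ by Proposition \ref{sum}.2, the right-hand side telescopes to $E(0)-E(T)$, so that
\[
\int_0^T R(t)\,dt=N\bigl(E(0)-E(T)\bigr)\le N\,E(0)=N\sum_{i=1}^N|v_i(0)|^2\le N^2C_1^2,
\]
the final step invoking the bound $|v_i|\le C_1$ from Proposition \ref{sum}.3.

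The main obstacle is not the energy computation but the bookkeeping at the sticking times: one must verify that the dissipation identity survives the passage $\epsilon\downarrow0$ on each piece and that the stuck-pair terms are correctly interpreted as zero. Both are controlled by the continuity of $v$ and the nonnegativity of $R$, which together with the finiteness of the set of sticking times make the telescoping and the monotone-convergence argument rigorous. Notably, no a priori control of $\psi$ near collision instants is required, since the integrability of $R$ emerges as an \emph{output} of the monotone-convergence step rather than being assumed.
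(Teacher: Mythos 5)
Your proposal is correct and follows essentially the same route as the paper: an exact dissipation identity obtained by the symmetrization trick, integrated on each interval between sticking times, with monotone convergence handling the limit $\epsilon\downarrow 0$ and telescoping plus Proposition \ref{sum}.3 giving the bound $N^2C_1^2$. The only cosmetic difference is your choice of Lyapunov functional: you differentiate the kinetic energy $E(t)=\sum_i|v_i|^2$ (getting $\dot E=-R/N$), while the paper differentiates $r(t)=\sum_{i,j}|v_i-v_j|^2$ (getting $\dot r=-2R$); since total momentum is conserved these are equivalent, and both yield the stated constant.
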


\begin{lem}\label{lem3}
Let the assumptions of Theorem \ref{main} be satisfied. Suppose further that there occurs no sticking in the time interval $[s_1,s_2]$. Then for all $i,j=1,...,N$ and all $\theta\in(0,1)$, we have
\begin{eqnarray*}
\int_{s_1}^{s_2}|x_j-x_i|^{-\theta}dt<\infty.
\end{eqnarray*}
\end{lem}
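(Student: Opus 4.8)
The plan is to exploit the fact that, in the absence of sticking, the relative position $x_j-x_i$ can vanish only at times of collision, where the relative velocity $v_j-v_i$ is nonzero; consequently $|x_j-x_i|$ grows at least linearly off each such zero, and $|x_j-x_i|^{-\theta}$ remains integrable precisely because $\theta<1$.

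First I would identify the singular set. Since $x\in(C^1([0,T]))^{Nd}$, the map $t\mapsto|x_j(t)-x_i(t)|$ is continuous, and $x_i(t)=x_j(t)$ forces either a collision ($v_i(t)\neq v_j(t)$) or a sticking ($v_i(t)=v_j(t)$); the latter is excluded on $[s_1,s_2]$ by hypothesis. Hence the zeros of $|x_j-x_i|$ in $[s_1,s_2]$ are exactly the collision times of the pair $(i,j)$. By Proposition \ref{sum}.2 every accumulation point of the set of collision times is a time of sticking, and since no sticking occurs in $[s_1,s_2]$, these collision times have no accumulation point in the compact interval $[s_1,s_2]$ and are therefore finite in number, say $s_1\leq t_1<\dots<t_m\leq s_2$.

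Next I would establish the local linear lower bound near each $t_k$. Using $x_j(t)-x_i(t)=\int_{t_k}^t(v_j(s)-v_i(s))\,ds$ together with $v_j(t_k)-v_i(t_k)\neq 0$ (a collision, not a sticking) and continuity of $v_j-v_i$, choose $\delta_k>0$ so small that $|(v_j(s)-v_i(s))-(v_j(t_k)-v_i(t_k))|\leq\tfrac12|v_j(t_k)-v_i(t_k)|$ for $|s-t_k|\leq\delta_k$. The triangle inequality then gives $|x_j(t)-x_i(t)|\geq\tfrac12|v_j(t_k)-v_i(t_k)|\,|t-t_k|$ on $|t-t_k|\leq\delta_k$ (one-sided at the endpoints if necessary), so that $|x_j-x_i|^{-\theta}\leq C|t-t_k|^{-\theta}$ there, and $\int_{t_k-\delta_k}^{t_k+\delta_k}|t-t_k|^{-\theta}\,dt<\infty$ since $\theta\in(0,1)$.

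Finally I would patch the pieces. On the compact complement $K:=[s_1,s_2]\setminus\bigcup_{k=1}^m(t_k-\delta_k,t_k+\delta_k)$ the continuous function $|x_j-x_i|$ has no zeros, hence is bounded below by a positive constant, so $|x_j-x_i|^{-\theta}$ is bounded on $K$ and contributes a finite integral; adding this to the finitely many finite local contributions near the $t_k$ yields $\int_{s_1}^{s_2}|x_j-x_i|^{-\theta}\,dt<\infty$. The main obstacle is the finiteness of the collision times, which rules out any pathological accumulation of singularities and reduces the estimate to finitely many integrable spikes; this is exactly where Proposition \ref{sum}.2 and the no-sticking hypothesis are indispensable, while the remaining steps are an elementary local linearization.
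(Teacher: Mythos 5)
Your proposal is correct and follows essentially the same route as the paper's proof: both arguments use Proposition \ref{sum}.2 together with the no-sticking hypothesis to conclude that the pair $(i,j)$ has only finitely many collision times in $[s_1,s_2]$, then exploit the nonzero relative velocity at each collision to obtain a linear lower bound $|x_j-x_i|\geq c\,|t-t_k|$ nearby (making $|t-t_k|^{-\theta}$ integrable since $\theta<1$), and finally bound the integrand on the remaining set where $|x_j-x_i|$ is continuous and nonvanishing. The only difference is organizational --- you excise small neighborhoods of the collision times and treat the compact complement at once, while the paper works interval-by-interval between consecutive collisions and splits each into three pieces --- which is immaterial.
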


Now we proceed with the proof of Proposition \ref{abs}

\begin{proof}[Proof of Proposition \ref{abs}]
Let $T_k$ and $N_s\leq N$ be like in Definition \ref{sol}. Then, by Proposition \ref{sum}.1, velocity $v$ is absolutly continuous on each interval $[T_k,T_{k+1}-\epsilon]$ for arbitrarly small $\epsilon>0$. Therefore, given $k=0,...,N_s$ by $(\ref{cs})_2$, we have
\begin{eqnarray*}
\frac{1}{N}\sum_{i=1}^N\int_{T_k}^{T_{k+1}-\epsilon}|\dot{v}_i(t)|dt&=& \frac{1}{N}\sum_{i=1}^N\int_{T_k}^{T_{k+1}-\epsilon}\left|\frac{1}{N}\sum_{j=1}^N(v_j-v_i)\psi(|x_j-x_i|)\right|dt\\
&\leq& \frac{1}{N^2}\sum_{i,j=1}^N\int_{T_k}^{T_{k+1}-\epsilon}|v_j-v_i|\psi(|x_j-x_i|)dt.
\end{eqnarray*}
Let us denote
\begin{eqnarray*}
{\mathcal L}_{ijk}^\epsilon:=\int_{T_k}^{T_{k+1}-\epsilon}|v_j-v_i|\psi(|x_j-x_i|)dt.
\end{eqnarray*}
Then, we have
\begin{eqnarray*}
{\mathcal L}_{ijk}^\epsilon = \int_{T_k}^{T_{k+1}-\epsilon}|v_j-v_i|^{2\delta}(\psi(|x_j-x_i|))^\delta\cdot |v_j-v_i|^{1-2\delta}(\psi(|x_j-x_i|))^{1-\delta}dt,
\end{eqnarray*}
where $0<\delta<<1$ is some very small number. We then apply Young's inequality with $\eta>0$ and exponent $q=\frac{2}{1-2\delta}\in(1,\infty)$ (then it's conjugate $q^{'}=\frac{2}{1+2\delta}$) to get
\begin{eqnarray*}
{\mathcal L}_{ijk}^\epsilon &\leq& C(\eta)\int_{T_k}^{T_{k+1}-\epsilon} |v_j-v_i|^\frac{4\delta}{1+2\delta}(\psi(|x_j-x_i|))^\frac{2\delta}{1+2\delta}dt\\
&+&\eta C\int_{T_k}^{T_{k+1}-\epsilon}|v_j-v_i|^2(\psi(|x_j-x_i|))^\frac{2-2\delta}{1-2\delta}dt =: I_{ijk}^\epsilon+II_{ijk}^\epsilon.
\end{eqnarray*}
By H\" older's inequality with $q=\frac{1+2\delta}{2\delta}, q^{'}=1+2\delta$, we have
\begin{eqnarray}\label{jeden}
I_{ijk}^\epsilon\leq C(\eta)\left(\int_{T_k}^{T_{k+1}-\epsilon}|v_j-v_i|^2\psi(|x_j-x_i|)dt\right)^\frac{2\delta}{1+2\delta} \cdot(T_{k+1}-\epsilon-T_{k})^\frac{1}{1+2\delta}.
\end{eqnarray}
To deal with the estimation of $II_{ijk}^\epsilon$ we use Lemma \ref{lemkp}. First let us check whether, the assumptions are satisfied. However we will check if the assumptions are satisfied on $[T_k,T_{k+1}]$ instead of $[T_k,T_{k+1}-\epsilon]$ since we need estimates to be uniform with respect to $\epsilon$ anyway. We take $f=x_j-x_i$, which by Proposition \ref{sum} is a vector valued $C^1([T_{k},T_{k+1}])\cap W^{2,1}_{loc}((T_{k},T_{k+1}))$ function that is equal to $0$ in at most countable subset of $[T_{k},T_{k+1}]$. Moreover we take $h(\lambda)=(\psi(\lambda))^\frac{2-2\delta}{1-2\delta} = \lambda^{-\theta}$, for $\theta=\frac{2-2\delta}{1-2\delta}\alpha\in(0,1)$, provided that $\delta$ is sufficiently small. Finally 
Lemma \ref{lem3} implies that assumption (\ref{kpzal}) is also satisfied. Therefore, for ${\mathcal R}$ defined by (\ref{r}), there exists a constant $C_2>0$ (note that $C_2$ depends on $\alpha,\delta$ and $C_1$), such that
\begin{eqnarray}\label{dwa}
II_{ijk}^\epsilon&\leq& \eta C\left(C_2\int_{T_k}^{T_{k+1}-\epsilon}|\dot{v}_j-\dot{v}_i|dt + {\mathcal R}(x_j-x_i,T_{k+1}-\epsilon)-{\mathcal R}(x_j-x_i,T_{k})\right)\nonumber\\
&\leq&  \eta C\left(C_2\int_{T_k}^{T_{k+1}-\epsilon}|\dot{v}_j|dt
+ C_2\int_{T_k}^{T_{k+1}-\epsilon}|\dot{v}_i|dt\right.\nonumber\\ &+& \left. {\mathcal R}(x_j-x_i,T_{k+1}-\epsilon)-{\mathcal R}(x_j-x_i,T_{k})\right)
\end{eqnarray}
and by combining (\ref{jeden}) with (\ref{dwa}) we end up with the estimation
\begin{eqnarray*}
\frac{1}{N}\sum_{i=1}^N\int_{T_k}^{T_{k+1}-\epsilon}|\dot{v}_i|dt&\leq& \frac{1}{N^2}\sum_{i,j=1}^NC(\eta)\left(\int_{T_k}^{T_{k+1}-\epsilon}|v_j-v_i|^2\psi(|x_j-x_i|)dt\right)^\frac{2\delta}{1+2\delta} \cdot(T_{k+1}-\epsilon-T_{k})^\frac{1}{1+2\delta} \\ &+&2\eta CC_2\frac{1}{N}\sum_{i=1}^N\int_{T_k}^{T_{k+1}-\epsilon}|\dot{v}_i|dt + \frac{\eta C}{N^2}\sum_{i,j=1}^N\left({\mathcal R}(x_j-x_i,T_{k+1}-\epsilon)-{\mathcal R}(x_j-x_i,T_{k})\right),
\end{eqnarray*}
which assuming that $\eta=\frac{1}{4 C C_2}$ leads to
\begin{eqnarray*}
\frac{1}{N}\sum_{i=1}^N\int_{T_k}^{T_{k+1}-\epsilon}|\dot{v}_i|dt &\leq& \frac{2C}{N^2}\sum_{i,j=1}^N\left(\int_{T_k}^{T_{k+1}-\epsilon}|v_j-v_i|^2\psi(|x_j-x_i|)dt\right)^\frac{2\delta}{1+2\delta} \cdot(T_{k+1}-\epsilon-T_{k})^\frac{1}{1+2\delta} \\ &+&\frac{1}{2N^2 CC_2}\sum_{i,j=1}^N\left({\mathcal R}(x_j-x_i,T_{k+1}-\epsilon)-{\mathcal R}(x_j-x_i,T_{k})\right).
\end{eqnarray*}
By the monotone convergence theorem and continuity of ${\mathcal R}$ (see the end of the proof of Lemma \ref{lemkp}), we may pass with $\epsilon$ to $0$ obtaining
\begin{eqnarray*}
\frac{1}{N}\sum_{i=1}^N\int_{T_k}^{T_{k+1}}|\dot{v}_i|dt&\leq& \frac{2C}{N^2}\sum_{i,j=1}^N\left(\int_{T_k}^{T_{k+1}}|v_j-v_i|^2\psi(|x_j-x_i|)dt\right)^\frac{2\delta}{1+2\delta} \cdot(T_{k+1}-T_{k})^\frac{1}{1+2\delta}\\
&+&\frac{1}{2N^2CC_2}\sum_{i,j=1}^N\left({\mathcal R}(x_j-x_i,T_{k+1})-{\mathcal R}(x_j-x_i,T_{k})\right)
\end{eqnarray*}
and finally sum over $k=0,...,N_s$ to get
\begin{eqnarray}\label{regpom1}
\frac{1}{N}\sum_{i=1}^N\int_0^{T}|\dot{v}_i|dt&\leq& \frac{2C}{N^2}\sum_{i,j=1}^N\sum_{k=1}^{N_s}\left(\int_{T_k}^{T_{k+1}}|v_j-v_i|^2\psi(|x_j-x_i|)dt\right)^\frac{2\delta}{1+2\delta} \cdot(T_{k+1}-T_{k})^\frac{1}{1+2\delta}\nonumber\\
&+&\frac{1}{2N^2CC_2}\sum_{i,j=1}^N\left({\mathcal R}(x_j-x_i,T)-{\mathcal R}(x_j-x_i,0)\right) =: I+II.
\end{eqnarray}
We yet again apply H\" older's inequality (this time for sums) with exponents $q=\frac{1+2\delta}{2\delta}$ and $q^{'}=1+2\delta$ along with Lemma \ref{lem1} to get
\begin{eqnarray}\label{regpom2}
I\leq\frac{2C}{N^2}\sum_{i,j=1}^N\left(\int_0^{T}|v_j-v_i|^2\psi(|x_j-x_i|)dt\right)^\frac{2\delta}{1+2\delta} \cdot T^\frac{1}{1+2\delta}\leq 2CC_1^2\cdot T^\frac{1}{1+2\delta}.
\end{eqnarray}
Moreover by the definition of ${\mathcal R}$ and Proposition \ref{sum}.3, we have
\begin{eqnarray}\label{regpom3}
II&\leq&\frac{1}{2N^2CC_2}\sum_{i,j=1}^N\left(|v_j(T)-v_i(T)||H(|x_j(T)-x_i(T)|)|+|v_j(0)-v_i(0)||H(|x_j(0)-x_i(0)|)|\right)\nonumber\\
&\leq&\frac{2}{(1-\alpha)CC_2}C_1^{2-\alpha}T^{1-\alpha}.
\end{eqnarray}

After combining inequalities (\ref{regpom1}), (\ref{regpom2}) and (\ref{regpom3}), we obtain
\begin{eqnarray*}
\frac{1}{N}\sum_{i=1}^N\int_0^T|\dot{v_i}|dt\leq 2CC_1^2\cdot T^\frac{1}{1+2\delta} + \frac{2}{(1-\alpha)C_2}C_1^{2-\alpha}T^{1-\alpha}=: M,
\end{eqnarray*}
which finishes the proof.
\end{proof}

\subsection{Proofs of Lemmas \ref{lem1} and \ref{lem3}}
In this section we present the technical yet straightforward proofs of Lemmas \ref{lem1} and \ref{lem3}.
\begin{proof}[Proof of Lemma \ref{lem1}]
We have
\begin{eqnarray*}
\int_0^TR(t)dt = \sum_{k=0}^{N_s}\int_{T_k}^{T_{k+1}}R(t)dt,
\end{eqnarray*}
with $T_k$ and $N_s$ from Definition \ref{sol}. By Proposition \ref{sum}.1, the function 
\begin{eqnarray*}
r(t):=\sum_{i,j=1}^N(v_i(t)-v_j(t))^2
\end{eqnarray*}
is absolutely continuous on each interval $[T_k,T_{k+1}-\epsilon]$ with arbitrarily small $\epsilon>0$. Then, by $(\ref{cs})_2$ on each such interval we have
\begin{eqnarray*}
\frac{d}{dt}r&=&2\sum_{i,j=1}^N(v_i-v_j)\left(\frac{1}{N}\sum_{k=1}^N(v_k-v_i)\psi(|x_i-x_k|)- \frac{1}{N}\sum_{k=1}^N(v_k-v_j)\psi(|x_j-x_k|)\right)\\
&=& \frac{2}{N}\sum_{i,j,k=1}^N(v_i-v_j)(v_k-v_i)\psi(|x_i-x_k|)- \frac{2}{N}\sum_{i,j,k=1}^N(v_i-v_j)(v_k-v_j)\psi(|x_j-x_k|).
\end{eqnarray*}
We substitute $i$ and $k$ in the first summand and $j$ and $k$ in the second summand to obtain
\begin{eqnarray*}
\frac{d}{dt}r&=&\frac{1}{N}\sum_{i,j,k=1}^N(v_i-v_j)(v_k-v_i)\psi(|x_i-x_k|) +\frac{1}{N}\sum_{i,j,k=1}^N(v_k-v_j)(v_i-v_k)\psi(|x_i-x_k|)\\
&-&\frac{1}{N}\sum_{i,j,k=1}^N(v_i-v_j)(v_k-v_j)\psi(|x_j-x_k|) -\frac{1}{N}\sum_{i,j,k=1}^N(v_i-v_k)(v_j-v_k)\psi(|x_j-x_k|)\\
&=&-\frac{1}{N}\sum_{i,j,k=1}^N(v_i-v_k)^2\psi(|x_i-x_k|) -\frac{1}{N}\sum_{i,j,k=1}^N(v_j-v_k)^2\psi(|x_j-x_k|)\\
&=&-2\sum_{i,j=1}^N(v_i-v_j)^2\psi(|x_i-x_j|) = -2R.
\end{eqnarray*}
Therefore
\begin{eqnarray*}
\int_{T_k}^{T_{k+1}-\epsilon} Rdt = \frac{1}{2}\left(r(T_k)-r(T_{k+1}-\epsilon)\right) 
\end{eqnarray*}
and thus, by the monotone convergence theorem and continuity of $r$, we pass to the limit with $\epsilon\to 0$ obtaining
\begin{eqnarray}\label{lem11}
\int_{T_k}^{T_{k+1}} Rdt =  \frac{1}{2}\left(r(T_k)-r(T_{k+1})\right).
\end{eqnarray}
Finally, we take a sum over all $k=0,...,N_s$ of the equations of the form (\ref{lem11}) to get
\begin{eqnarray*}
\int_0^TR(t)dt = \frac{1}{2}\left(r(0)-r(T)\right)\leq C_1N^2,
\end{eqnarray*}
where the final estimation is justified by Proposition \ref{sum}.3.
\end{proof}
\begin{proof}[Proof of Lemma \ref{lem3}]
Given $i,j=1,...,N$, we have
\begin{eqnarray}\label{l3pom}
\int_{s_1}^{s_2}|x_j-x_i|^{-\theta}dt = \sum_k\int_{t_{k-1}}^{t_k}|x_j-x_i|^{-\theta}dt,
\end{eqnarray}
where $t_k$ denote the times of collision of $x_j$ and $x_i$ that happen in the time interval $[{s_1},{s_2}]$. By Proposition \ref{sum}.3, the only density points of the times of collision are times of sticking and since there are no times of sticking in $[{s_1},{s_2}]$ -- the sum on the right-hand side of (\ref{l3pom}) is finite. Thus it is sufficient to show that each summand is finite (even if it is arbitrarily large), hence from this point we fix $k$. Now, if the particles do not stick together in $[{s_1},{s_2}]$, then for $t\in[{s_1},{s_2}]$ either $x_i(t)\neq x_j(t)$ or $v_i(t)\neq v_j(t)$. In particular $v_j(t_{k-1})-v_i(t_{k-1}) =: v_{k-1}\neq 0$ and $v_j(t_{k})-v_i(t_{k}) =: v_k\neq 0$ and by continuity of $v$ (see Proposition \ref{sum}), there exist positive $\rho$ and $\delta$, such that
\begin{eqnarray*}
v_j-v_i\in B(v_{k-1},\rho)&\ \ \ {\rm in}&\ \ \ [t_{k-1},t_{k-1}+\delta]\ \ \ {\rm and}\\
v_j-v_i\in B(v_k,\rho)&\ \ \ {\rm in}&\ \ \ [t_k-\delta,t_k]
\end{eqnarray*}
and $0$ does not belong to neither $B(v_{k-1},\rho)$ nor $B(v_k,\rho)$. Let us split the integral from the right-hand side of (\ref{l3pom}) in the following manner:
\begin{eqnarray*}
\int_{t_{k-1}}^{t_k}|x_j-x_i|^{-\theta}dt = \left(\int_{t_{k-1}}^{t_{k-1}+\delta}+\int_{t_{k-1}+\delta}^{t_k-\delta}+\int_{t_{k}-\delta}^{t_k}\right) |x_j-x_i|^{-\theta}dt =: I + II + III.
\end{eqnarray*}
Then there exists an arbitrarily large constant $C(\delta)$, that bounds $II$ from the above since $|x_j-x_i|$ is continuous and nonzero on $[t_{k-1}+\delta, t_k-\delta]$. To estimate $I$ we notice that for $t\in[t_{k-1}, t_{k-1}+\delta]$ it holds:
\begin{eqnarray*}
|x_j(t)-x_i(t)| = \left|\int_{t_{k-1}}^tv_j-v_i\right|\geq \inf_{\xi\in B(v_{k-1},\rho)}|\xi|(t-t_{k-1})\geq c(t-t_{k-1})
\end{eqnarray*}
for some small constant $c>0$. Thus
\begin{eqnarray*}
\int_{t_{k-1}}^{t_{k-1}+\delta}|x_j-x_i|^{-\theta}dt\leq c^{-\theta}\int_{t_{k-1}}^{t_{k-1}+\delta}(t-t_{k-1})^{-\theta}dt<\infty,
\end{eqnarray*}
since $\theta<1$. Estimation of $III$ proceeds simiralry to the estimation of $I$.
\end{proof}
\subsection{Uniqueness of solutions}\label{uni}
Our goal in this section is to prove uniqueness of solutions to (\ref{cs}) for $\alpha\in(0,\frac{1}{2})$.
\begin{prop}\label{un1}
With the assumptions of Theorem \ref{main}, the $W^{2,1}$ weak solution of (\ref{cs}) is unique.
\end{prop}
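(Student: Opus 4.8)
The plan is to argue by a dissipative energy estimate. Suppose $x$ and $\bar x$ are two $W^{2,1}$ solutions of (\ref{cs}) on $[0,T]$ with the same initial data, and set $y_i:=x_i-\bar x_i$ and $w_i:=v_i-\bar v_i$, so that $\dot y_i=w_i$ and $y_i(0)=w_i(0)=0$. Subtracting the velocity equations gives, a.e.,
\[
\dot w_i = \frac1N\sum_j (w_j-w_i)\psi(|x_j-x_i|) + \frac1N\sum_j(\bar v_j-\bar v_i)\bigl(\psi(|x_j-x_i|)-\psi(|\bar x_j-\bar x_i|)\bigr).
\]
I would split the right-hand side into a \emph{dissipative part} (the first sum, built only from the weight of the first solution) and a \emph{singular forcing part} (the second sum, carrying the difference of weights). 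A helpful structural remark is that, summing over $i$, both sums are antisymmetric in $i\leftrightarrow j$ and hence vanish; thus $\sum_i w_i$ is conserved and equals $0$, which lets me trade $\sum_i|w_i|^2$ for the pairwise dissipation below.

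Next I would introduce $\mathcal D(t):=\sum_i(|y_i|^2+|w_i|^2)$ and differentiate it on an interval $[T_k,T_{k+1}-\epsilon]$, where by Proposition \ref{sum}.1 the velocities are absolutely continuous. The position part contributes $2\sum_i y_i\cdot w_i\le\mathcal D$. In the velocity part, symmetrizing the dissipative sum in $i\leftrightarrow j$ produces the nonpositive term $-\frac1N\sum_{i,j}|w_i-w_j|^2\psi(|x_j-x_i|)$, which I would keep in order to absorb singularities, together with the singular cross term $\frac1N\sum_{i,j}(\bar v_j-\bar v_i)\cdot(w_i-w_j)\bigl(\psi(|x_j-x_i|)-\psi(|\bar x_j-\bar x_i|)\bigr)$. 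Uniqueness then reduces to controlling this cross term by the dissipation plus an integrable multiple of $\mathcal D$; Grönwall's lemma and $\mathcal D(0)=0$ would give $\mathcal D\equiv 0$ on $[T_k,T_{k+1}]$, and finitely many such steps (there are at most $N$ sticking times by Proposition \ref{sum}.2), glued by continuity of $\mathcal D$ across the sticking times exactly as in the proof of Proposition \ref{abs}, would yield $\mathcal D\equiv 0$ on $[0,T]$.

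The heart of the matter is the estimate of the cross term. Writing $a=|x_j-x_i|$ and $b=|\bar x_j-\bar x_i|$, I would use that $\psi(s)=s^{-\alpha}$ is monotone, which furnishes both the mean-value bound $|\psi(a)-\psi(b)|\le\alpha\min(a,b)^{-\alpha-1}|a-b|$ and the crude bound $|\psi(a)-\psi(b)|\le\min(a,b)^{-\alpha}$, together with $|a-b|\le|y_i|+|y_j|$. Bounding $|\bar v_j-\bar v_i|$ by Proposition \ref{sum}.3 and pairing the factor $|w_i-w_j|$ against the dissipation weight $\psi(|x_j-x_i|)^{1/2}$ via Young's inequality leaves a purely singular time integral in $(x,\bar x)$; the delicate point is to arrange the exponents so that the residual weight of the form $\min(a,b)^{-\theta}$ has $\theta<1$ and is therefore integrable in time by Lemma \ref{lem3} (applied to both solutions). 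This is precisely where $\alpha<\tfrac12$ is needed: it guarantees $2\alpha<1$, keeping the relevant powers below the integrability threshold, and it is also the regime in which the interpolation inequality of Lemma \ref{lemkp} is available.

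The step I expect to be the main obstacle is exactly this singular forcing term. A naive interpolation between the two bounds above produces a dependence of the form $g(t)\,\mathcal D^{\lambda}$ with $\int_0^T g<\infty$ but $\lambda<1$, i.e. an Osgood/sublinear inequality that does \emph{not} by itself force $\mathcal D\equiv 0$. To upgrade it to a genuine Grönwall inequality with an integrable coefficient, I anticipate having to exploit the dissipative term $-\frac1N\sum_{i,j}|w_i-w_j|^2\psi(|x_j-x_i|)$ in tandem with the interpolation inequality (Lemma \ref{lemkp}) and the already established $W^{2,1}$ bound of Proposition \ref{abs}, mirroring the mechanism used there to gain regularity: the weight difference is converted, through integration by parts in time, into terms controlled by $\int|\dot w_i|\,dt$, which is finite. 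Balancing these estimates so that the residual singularities remain integrable precisely when $\alpha<\tfrac12$ is the technical crux of the proof.
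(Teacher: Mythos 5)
Your setup (splitting into a dissipative part and a singular forcing part, symmetrizing in $i\leftrightarrow j$ to produce the nonpositive term) matches the opening of the paper's Step~2, and you correctly diagnose that naive interpolation between the mean-value bound and the crude bound yields only a sublinear inequality $\dot{\mathcal D}\le g\,\mathcal D^{\lambda}$, $\lambda<1$, from which uniqueness does not follow. But the proposal stops exactly there: the repair you sketch (absorbing the cross term into the dissipation via Young, then invoking Lemma \ref{lemkp} and integration by parts) is not carried out, and it is not the mechanism that works. The paper's uniqueness proof never uses Lemma \ref{lemkp} nor the dissipative term to control the singular forcing. The missing idea is a \emph{local-in-time argument anchored at a time where the two solutions coincide}, combined with a pairwise case analysis. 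Fix $t_0$ with $x^1(t_0)=x^2(t_0)$, $v^1(t_0)=v^2(t_0)$ (initially $t_0=0$). For a pair with $x_i(t_0)\neq x_j(t_0)$, $\psi$ is locally Lipschitz and the estimate is standard. For a pair that collides but does not stick at $t_0$, continuity of the velocities gives $|x_j^m(t)-x_i^m(t)|\ge \frac{\delta}{2}(t-t_0)$, so the mean-value weight is at worst $C(t-t_0)^{-1-\alpha}$; crucially, because the two solutions \emph{agree at} $t_0$, one also has $\left|(x_j^1-x_i^1)-(x_j^2-x_i^2)\right|\le 2(t-t_0)\sqrt{r_{inc}(t)}$, and this extra factor $(t-t_0)$ cancels one power of the singularity, leaving the genuinely linear Gronwall inequality $\frac{d}{dt}r_{inc}\le C (t-t_0)^{-\alpha}\, r_{inc}$ with an integrable coefficient. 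Stuck pairs are handled by a separate lemma (Lemma \ref{stuck}: once stuck, always stuck, so they are one particle), and a continuation argument glues local uniqueness across collision times using continuity of $(x,v)$.

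Your plan of a single Gronwall estimate on each whole interval $[T_k,T_{k+1}]$ cannot close precisely because of interior collision times $t_c$ at which the two solutions need not coincide: there the weight $\min(a,b)^{-1-\alpha}\sim|t-t_c|^{-1-\alpha}$ is not time-integrable (Lemma \ref{lem3} only reaches exponents $\theta<1$), and there is no compensating factor vanishing at $t_c$ --- the factor $|a-b|$ is controlled only by $\sqrt{\mathcal D}$, which is exactly the quantity you are trying to prove is zero. The anchoring at coincidence points is what converts the dangerous $|a-b|$ into $(t-t_0)\sqrt{r_{inc}}$. A further sign the proposal is off-track: the restriction $\alpha<\frac12$ plays no role in uniqueness --- the paper's argument needs only $\alpha<1$ (indeed Theorem \ref{main2} asserts uniqueness of piecewise weak solutions for all $\alpha\in(0,1)$); $\alpha<\frac12$ enters solely through the regularity statement (Proposition \ref{abs} via Lemma \ref{lemkp}), which is a separate ingredient of Theorem \ref{main}.
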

\begin{proof}
Suppose that $(x^1,v^1)$ and $(x^2,v^2)$ are two $W^{2,1}$ weak solutions of $(\ref{cs})$, with weight $\psi$ given by (\ref{psi}) and $\alpha\in(0,\frac{1}{2})$ on the time interval $[0,T]$, subjected to the initial data $(x_0,v_0)$. We will show that in fact $(x^1,v^1)\equiv (x^2,v^2)$. The proof will follow by four steps. In steps 1-3 we prove uniqueness in a small neighborhood of the initial time $t=0$ considering three cases: non-collision initial data, non-sticking initial data and initial data with particles that are stuck together. In step 4 we combine our efforts from previous steps and conclude the proof.
\begin{description}
\item[Step 1.]
If there are no collisions at the initial time, which means that for all $i\neq j$, we have $x_{0,i}\neq x_{0,j}$, then by the fact that $x^1,x^2\in C^1([0,T])$, there exists $\delta>0$, such that for all $i\neq j$, we have $|x^m_i(s)-x^m_j(s)|>\delta$ with $m=1,2$ for $s\in[0,\delta]$. The communication weight $\psi$ is smooth on the domain $[\delta,+\infty)$ and thus, on the time interval $[0,\delta]$ system $(\ref{cs})$ is a nonlinear ODE with a Lipschitz continuous nonlinearity and uniqueness is standard.\\
\item[Step 2.]
In the case of no sticking at the initial time (which means that for all $i\neq j$ if $x_{0,i}=x_{0,j}$ then $v_{0,i}\neq v_{0,j}$) let us consider
\begin{eqnarray*}
r(t):=\sum_{i=1}^N(v_i^1(t)-v_i^2(t))^2.
\end{eqnarray*}
By the assumptions $r$ is an absolutely continuous function, thus it has a bounded variation and can be represented as a sum of two functions, respectively nonincreasing and nondecreasing. Noting that $r(0)=0$, let
\begin{eqnarray*}
r_{inc}(t) := \int_0^t(\dot{r}(s))_+ds,
\end{eqnarray*}
where by $(\dot{r})_+$ we denote the positive part of the function $\dot{r}$. Then if we prove that $r_{inc} \equiv 0$ then we will also know that $r\equiv 0$ and that actually $x^1\equiv x^2$.
By $(\ref{cs})_2$, we have
\begin{eqnarray*}
\frac{d}{dt}r_{inc} =\frac{2}{N}\left[\sum_{i,j=1}^N(v_i^1-v_i^2)\left((v_j^1-v_i^1)\psi(|x^1_j-x^1_i|)- (v_j^2-v_i^2)\psi(|x^2_j-x^2_i|)\right)\right]_+.
\end{eqnarray*}
After substituting $i$ and $j$ in the above equation we obtain
\begin{eqnarray}\label{st20}
\frac{d}{dt}r_{inc} &=& \frac{1}{N}\left[\sum_{i,j=1}^N\left((v_i^1-v_i^2)-(v_j^1-v_j^2)\right) \left((v_j^1-v_i^1)\psi(|x^1_j-x^1_i|)- (v_j^2-v_i^2)\psi(|x^2_j-x^2_i|)\right)\right]_+\nonumber\\
 &=& \frac{1}{N}\left[\sum_{i,j=1}^N\left((v_i^1-v_i^2)-(v_j^1-v_j^2)\right) \left((v_j^1-v_i^1)-(v_j^2-v_i^2)\right)\psi(|x_j^1-x_i^1|)\right.\nonumber\\ &+& \left.\sum_{i,j=1}^N\left((v_i^1-v_i^2)-(v_j^1-v_j^2)\right)(v_j^2-v_i^2)\left(\psi(|x_j^1-x_i^1|)-\psi(|x_j^2-x_i^2|)\right)\right]_+\nonumber\\
&=&\frac{1}{N}\left[-\sum_{i,j=1}^N\left((v_i^1-v_i^2)-(v_j^1-v_j^2)\right)^2\psi(|x_j^1-x_i^1|)\right.\nonumber\\
 &+& \left.\sum_{i,j=1}^N\left((v_i^1-v_i^2)-(v_j^1-v_j^2)\right)(v_j^2-v_i^2)\left(\psi(|x_j^1-x_i^1|)-\psi(|x_j^2-x_i^2|)\right)\right]_+\nonumber\\
&\leq&\frac{1}{N}\sum_{i,j=1}^N\left|(v_i^1-v_i^2)-(v_j^1-v_j^2)\right||v_j^2-v_i^2|\left|\psi(|x_j^1-x_i^1|)-\psi(|x_j^2-x_i^2|)\right|.
\end{eqnarray}
By Proposition \ref{sum}.3 the factor $|v_j^2-v_i^2|$ is bounded uniformly with respect to $i,j$ and $t$. Next, we fix $i$ and $j$ and consider two cases:
\begin{description}
\item[Case 1: $x_i(0)\neq x_j(0)$.]

This is in fact the situation from step 1, i.e. there exists $\delta>0$, such that for all $i,j$ with $x_i(0)\neq x_j(0)$, we have
\begin{eqnarray*}
|x_i^m-x_j^m|\geq\delta,\ \ \ \ \ \ m=1,2
\end{eqnarray*}
on $[0,\delta]$. Then
\begin{eqnarray}\label{st21}
\left|\psi(|x_j^1(t)-x_i^1(t)|)-\psi(|x_j^2(t)-x_i^2(t)|)\right|\leq L(\delta)\left|(x_j^1(t)-x_i^1(t))-(x_j^2(t)-x_i^2(t))\right|
\end{eqnarray}
for some Lipschitz constant $L(\delta)$.
\item[Case 2: $x_i(0) = x_j(0)$.]
Let us recall that in this step we assume that if $x_i(0) = x_j(0)$ then $v_i(0) \neq v_j(0)$. Therefore for our $i$ and $j$ we have $v_j(0)-v_i(0)=:v_{ji}\neq 0$ and by continuity of $v^1$ and $v^2$ there exist $\delta>0$ such that
\begin{eqnarray*}
|v_i^m-v_j^m|\geq \delta,\ \ \ \ \ \ m=1,2,
\end{eqnarray*}
which implies that
\begin{eqnarray*}
|x_i^m(s)-x_j^m(s)|\geq \frac{1}{2}\delta s
\end{eqnarray*}
on $[0,\delta]$ for all $i,j$ and $m=1,2$. Thus by mean value theorem
\begin{eqnarray}\label{st22}
\left|\psi(|x_j^1-x_i^1|)-\psi(|x_j^2-x_i^2|)\right| &\leq& C\left|(x_j^1-x_i^1)-(x_j^2-x_i^2)\right| \int_0^1\left|\theta|x_j^1-x_i^1|+(1-\theta)|x_j^2-x_i^2|\right|^{-1-\alpha}d\theta\nonumber\\
&\leq& C\left|(x_j^1-x_i^1)-(x_j^2-x_i^2)\right|\left|\frac{\delta}{2} t\right|^{-1-\alpha}\nonumber\\
&\leq& C(\delta)\frac{\left|(x_j^1-x_i^1)-(x_j^2-x_i^2)\right|}{t}\left|t\right|^{-\alpha}.
\end{eqnarray}
\end{description} 
Moreover in either Case 1 or Case 2
\begin{eqnarray}\label{st23}
\left|(x_j^1(t)-x_i^1(t))-(x_j^2(t)-x_i^2(t))\right|&\leq& t\sup_{s\in[0,t]}\left|(v_j^1(s)-v_i^1(s))-(v_j^2(s)-v^2_i(s))\right|\nonumber\\
&\leq& 2t\sup_{s\in[0,t]}\sqrt{r(s)}
\leq 2t\sup_{s\in[0,t]}\sqrt{r_{inc}(s)}\leq 2t\sqrt{r_{inc}(t)}
\end{eqnarray}
and thus by combining inequalities (\ref{st20}), (\ref{st21}), (\ref{st22}) and (\ref{st23}) with H\" older's inequality one obtains
\begin{eqnarray*}
\frac{d}{dt}r_{inc}\leq Cr_{inc}\cdot f,
\end{eqnarray*}
where
\begin{eqnarray*}
f(t):=\max\{2L(\delta)t,2C(\delta)|t|^{-\alpha}\},
\end{eqnarray*}
which is an integrable function. Therefore Gronwall's lemma implies that the solution is unique at least on $[0,\delta]$ for a sufficiently small, positive $\delta$.\\
\item[Step 3.] The purpose of this step is to prove uniqueness in case, when at least two particles are stuck together at the initial time, i.e. $x_{0,i}=x_{0,j}$ and $v_{0,i}=v_{0,j}$ for some $i,j = 1,...,N$. We present this step as a consequence of the presented below lemma.
\begin{lem}\label{stuck}
Suppose that at some time $t_0\in[0,T]$ and some $i,j=1,...,N$, we have $x_i(t_0)=x_j(t_0)$ and $v_i(t_0)=v_j(t_0)$. Then $x_i\equiv x_j$ on $[t_0,t_0+\delta]$ for some positive $\delta$.
\end{lem}
The above lemma in particular implies that on $[0,\delta]$ any particles that are stuck together can be treated as a single particle. From the point of view of uniqueness it means that we do not have to consider the case, when two or more particles are stuck together, since they cannot separate anyway. Thus if only the trajectory on which they move is unique then their respective trajectories are unique too (since in fact they are the same).
\begin{proof}[Proof of Lemma \ref{stuck}]
The proof follows similarly to that of step 2. Let
\begin{eqnarray*}
r(t):=\sum_{i,j\in[i]}(v_i(t)-v_j(t))^2,
\end{eqnarray*}
where $[i]$ denotes the set of those $j$ that $x_j(t_0)=x_i(t_0)$ and $v_j(t_0)=v_i(t_0)$. Therefore if we show that $r\equiv 0$ then the thesis of Lemma \ref{stuck} will be satisfied. We have
\begin{eqnarray}
\frac{d}{dt}r_{inc} &=& \frac{2}{N}\left[\sum_{i,j\in[i]}\sum_{k=1}^N(v_i-v_j) \left((v_k-v_i)\psi(|x_k-x_i|)-(v_k-v_j)\psi(|x_k-x_j|)\right)\right]_+\nonumber\\
&\leq& \frac{2}{N}\left(\sum_{i,j,k\in[i]}+\sum_{i,j\in[i]}\sum_{k\notin[i]}\right)\left[(v_i-v_j) \left((v_k-v_i)\psi(|x_k-x_i|)-(v_k-v_j)\psi(|x_k-x_j|)\right)\right]_+\nonumber\\
&\stackrel{see\ below}{\leq}&\frac{2}{N}\sum_{i,j\in[i]}\sum_{k\notin[i]}\left[(v_i-v_j) \left((v_k-v_i)\psi(|x_k-x_i|)-(v_k-v_j)\psi(|x_k-x_j|)\right)\right]_+\label{st31}\\
&\leq&\frac{2}{N}\sum_{i,j\in[i]}\sum_{k\notin[i]}\left[-(v_i-v_j)^2\psi(|x_k-x_i|)\right]_+ +\left[(v_i-v_j)(v_k-v_j)(\psi(|x_k-x_i|)-\psi(|x_k-x_j|))\right]_+\nonumber\\
&\leq& \frac{C}{N}\sum_{i,j\in[i]}\sum_{k\notin[i]}|v_i-v_j||\psi(|x_k-x_i|)-\psi(|x_k-x_j|)|.\nonumber
\end{eqnarray}
Inequality (\ref{st31}) follows by the fact that in the triple sum over the set $[i]$ the indexes may be substituted in the same fashion as in the proof of Lemma \ref{lem1}.
We estimate $|\psi(|x_k-x_i|)-\psi(|x_k-x_j|)|$ similarly to estimations from Case 1 and Case 2 in the previous step obtaining altogether
\begin{eqnarray*}
\frac{d}{dt}r_{inc}\leq C r_{inc}\cdot f
\end{eqnarray*}
for some integrable function $f$. Then by Gronwall's lemma $r_{inc}\equiv 0$ on $[0,\delta]$, which means that also $r\equiv 0$ on $[0,\delta]$ and that for all $i,j\in[i]$ we have $x_i\equiv x_j$ on $[0,\delta]$.
\end{proof}
\item[Step 4.] In this step we finish the proof of uniqueness by putting together all the information obtained in previous steps. Suppose that we have two distinct solutions $(x^1,v^1)$ and $(x^2,v^2)$ originating in $(x_0,v_0)$. Then, regardless of the initial data, by all three previous steps, there exists an interval $[0,\delta]$ on which $x^1\equiv x^2=: x$. Without a loss of generality we may assume that for $t=\delta$ we have $x_i(t)\neq x_j(t)$ or $x_i\equiv x_j$ on $[0,\delta]$ for all $i,j=1,...,N$. Therefore, by step 1 and step 3 we may prolong the interval on which $x^1\equiv x^2$. In fact we may prolong it as long as there is no collision between any particles. Let $t_0$ be the first time of collision. Then by step 1 and step 3, the uniqueness is ensured up to $t_0-\epsilon$ for arbitrarily small $\epsilon>0$. Now, by Proposition \ref{sum}, $(x,v)$ is continuous on whole $[0,T]$, thus it has a unique left sided limit at $t_0$, which prolongs uniqueness up to $t_0$. Finally we may treat $t_0$ as the new starting point and obtain uniqueness on $[t_0,t_1]$. Therefore the solution is unique between any two times of collision and the (possibly infinite) sum of such intervals include all $[0,T]$.
\end{description}  
\end{proof}
We end this section with the proof of uniqueness of piecewise weak solutions.
\begin{proof}[Proof of Theorem \ref{main2}]
The proof is almost exactly the same as of Proposition \ref{un1}. The first difference is that the function $r$ from step 2 was absolutely continuous by the fact that the solutions were $W^{2,1}$ weak on $[0,T]$, while this time they are $W^{2,1}$ weak on each interval $[T_k,T_{k+1}-\epsilon]$ as stated in Proposition \ref{sum}.1. This however is of no difference since we need $r$ to be absolutely continuous only on $[0,\delta]$ for some small $\delta>0$. The second difference is that this time we actually do not need Lemma \ref{stuck} since by Definition \ref{sol} and in particular by the use of sets $B_i(t)$ (defined in (\ref{bi})) we already ensured that the trajectories remain stuck together indefinitely.
\end{proof}
{\bf Acknowledgements.} I would like to thank Piotr B. Mucha for helpful remarks and inspirational discussions. This work was supported by International Ph.D. Projects Programme of Foundation for Polish Science operated within the Innovative Economy Operational Programme 2007-2013 funded by European Regional Development Fund (Ph.D. Programme:
Mathematical Methods in Natural Sciences) and partially supported by the Polish NCN grant PRELUDIUM no. 2013/09/N/ST1/04113.
\section{Appendix}\label{app}
In the appendix we present interpolation inequality which was crucial in the proof of Proposition \ref{abs}. We moved it here from section \ref{reg}, since the proof is self-contained and does not refer directly to the subject of the rest of the paper. This inequality along with it's proof comes in almost unchanged form from paper \cite{ka1} but we present the proof anyway for readers convenience.
\begin{lem}\label{lemkp}
Let $f=(f_1,...,f_d):[0,T]\to \r^d$ be a $C^1([0,T])\cap W^{2,1}_{loc}((0,T))$ vector valued function that is nonzero a.e.. Moreover let $h:[0,\infty)\to[0,\infty)$ be defined as
\begin{eqnarray*}
h(\lambda)=\lambda^{-\theta},
\end{eqnarray*}
for some $0<\theta <1$. Then there exists a constant $C_2>0$ depending on $\|f\|_\infty$ and $\theta$, such that we have
\begin{eqnarray}\label{kp}
\int_0^T|f^{'}|^2h(|f|)dt\leq C_2\int_0^T|f^{''}|dt + {\mathcal R}(f,T)-{\mathcal R}(f,0),
\end{eqnarray}
provided that
\begin{eqnarray}\label{kpzal}
\int_\epsilon^{T-\epsilon}h(|f|)dt <\infty
\end{eqnarray}
for all $\epsilon>0$. Here, for $H(\lambda) = \frac{1}{1-\theta}\lambda^{1-\theta}$ -- a primitive of $h$, we denote
\begin{eqnarray}\label{r}
{\mathcal R}(f,t):=
\left\{
\begin{array}{ccc}
\frac{f(t)f^{'}(t)}{|f(t)|}H(|f(t)|)& {\rm for}& f(t)\neq 0,\\
0& {\rm for}& f(t)=0.
\end{array}
\right.
\end{eqnarray}
\end{lem}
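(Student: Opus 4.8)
The plan is to establish the pointwise (a.e.) differential inequality
\[
|f'|^2 h(|f|) \leq \frac{d}{dt}{\mathcal R}(f,t) + C_2|f''|
\]
on the open set where $f\neq 0$, and then integrate it. The whole computation hinges on the algebraic identity $H(\lambda)/\lambda = \frac{1}{1-\theta}h(\lambda)$, which holds precisely because $h(\lambda)=\lambda^{-\theta}$ and $H(\lambda)=\frac{1}{1-\theta}\lambda^{1-\theta}$; this is also where the restriction $0<\theta<1$ enters, both to keep $H$ bounded (since $1-\theta>0$) and to make the sign of the leftover term favorable.

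First I would introduce $\rho:=|f|$ and note that $\rho' = (f\cdot f')/|f|$ wherever $f\neq 0$, so that ${\mathcal R}(f,t)=\rho' H(\rho)$. Differentiating gives $\frac{d}{dt}{\mathcal R}=\rho'' H(\rho)+(\rho')^2 h(\rho)$, and a direct computation yields $\rho'' = (|f'|^2+f\cdot f'')/|f| - (f\cdot f')^2/|f|^3$. Substituting and applying $H(\rho)/|f|=\frac{1}{1-\theta}h(\rho)$ to every term carrying the factor $H(\rho)/|f|$, the contributions collect into
\[
\frac{d}{dt}{\mathcal R} = \frac{1}{1-\theta}|f'|^2 h(\rho) - \frac{\theta}{1-\theta}\frac{(f\cdot f')^2}{|f|^2}h(\rho) + \frac{(f\cdot f'')H(\rho)}{|f|}.
\]

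Then I would apply the Cauchy--Schwarz inequality twice. Using $(f\cdot f')^2\leq |f|^2|f'|^2$ bounds the negative middle term from below by $-\frac{\theta}{1-\theta}|f'|^2 h(\rho)$, so the two $|f'|^2 h(\rho)$ contributions combine with coefficient exactly $1$; using $|f\cdot f''|\leq |f|\,|f''|$ bounds the last term in absolute value by $|f''|H(\rho)$. Since $H(\rho)=\frac{1}{1-\theta}|f|^{1-\theta}\leq \frac{1}{1-\theta}\|f\|_\infty^{1-\theta}=:C_2$, this produces the asserted differential inequality with the stated dependence of $C_2$ on $\|f\|_\infty$ and $\theta$.

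Finally I would integrate, taking care of the limited regularity. Because $f\in W^{2,1}_{loc}((0,T))$ only, I would first work on $[\epsilon,T-\epsilon]$, where assumption (\ref{kpzal}) together with $f\in C^1$ guarantees $\int_\epsilon^{T-\epsilon}|f'|^2 h(|f|)\,dt\leq \|f'\|_\infty^2\int_\epsilon^{T-\epsilon}h(|f|)\,dt<\infty$. The set $\{t:f(t)\neq 0\}$ is a countable union of open intervals on each of which the computation and the fundamental theorem of calculus apply, while its complement is a null set by hypothesis and hence contributes nothing to $\int|f'|^2 h(|f|)$. I expect the main obstacle to be stitching these intervals together so as to conclude $\int_\epsilon^{T-\epsilon}\frac{d}{dt}{\mathcal R}\,dt={\mathcal R}(T-\epsilon)-{\mathcal R}(\epsilon)$ globally; this rests on the continuity of ${\mathcal R}$, which follows from $|{\mathcal R}(f,t)|\leq |f'(t)|H(|f(t)|)\to 0$ as $f(t)\to 0$, so that ${\mathcal R}$ matches its prescribed value $0$ on the zero set and the interior boundary terms telescope away. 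A monotone-convergence argument in $\epsilon\to 0$ (the integrand on the left being nonnegative) then recovers (\ref{kp}).
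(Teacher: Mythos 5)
Your proof is correct, and it takes a genuinely different route from the paper's. The paper never differentiates ${\mathcal R}$ pointwise: it regularizes the modulus by $f_\eta=\sqrt{|f|^2+\eta}$, integrates $f_\eta'\,(H(f_\eta))'=|f_\eta'|^2h(f_\eta)$ by parts on $[\epsilon,T-\epsilon]$, and then passes to the limit $\eta\to 0$ by dominated convergence plus Fatou --- this is precisely where hypothesis (\ref{kpzal}) enters, to provide the dominating function $C|f''|+\tfrac{1}{1-\theta}|f'|^2h(|f|)$ --- before letting $\epsilon\to 0$ by monotone convergence and the same continuity of ${\mathcal R}$ that you use. You instead prove an exact pointwise identity for $\tfrac{d}{dt}{\mathcal R}$ on the open set $\{f\neq 0\}$, absorb the negative term by Cauchy--Schwarz (your bookkeeping $\tfrac{1}{1-\theta}-\tfrac{\theta}{1-\theta}=1$ is exactly right), and handle the singular set by decomposing $\{f\neq 0\}$ into countably many intervals whose interior boundary terms vanish because ${\mathcal R}$ extends continuously by $0$ across the zeros of $f$. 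Your route buys three things: the two places where $\theta<1$ is needed become explicit (the sign of the combined coefficient, and the vanishing of $H$ at $0$); hypothesis (\ref{kpzal}) is never actually used, so you prove a marginally stronger statement; and the vector-valued case is handled correctly --- note that the paper's displayed formula for $f_\eta''$ is an identity only when $f$ and $f'$ are parallel (e.g.\ $d=1$), and for $d>1$ its Fatou step, as written, produces only the radial part $\int\frac{(f\cdot f')^2}{|f|^2}h(|f|)\,dt$ on the left-hand side; the extra nonnegative term $\frac{|f'|^2}{|f|}-\frac{(f\cdot f')^2}{|f|^3}$ that you control via Cauchy--Schwarz is exactly what must be tracked to recover the full $|f'|^2$. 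What the regularization buys the paper in exchange is that it avoids your decomposition-and-stitching step entirely. In your write-up the one detail worth spelling out is that ${\mathcal R}=\rho'H(\rho)$ is absolutely continuous on compact subintervals of each component of $\{f\neq 0\}$ (since $f'\in W^{1,1}$ there and $|f|$ is bounded away from zero), so that the fundamental theorem of calculus you invoke really applies; with that remark added, the argument is complete.
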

\begin{proof}
We assume that
\begin{eqnarray}\label{kpzal2}
\int_0^T|f^{''}|dt<\infty,
\end{eqnarray}
since otherwise, inequality (\ref{kp}) holds with infinity on the right-hand side.
For $\eta\geq 0$, let us define
\begin{eqnarray*}
f_\eta:=\sqrt{f^2+\eta}.
\end{eqnarray*}
Then $f_\eta$ is a bounded (uniformly for $0<\eta<1$) real function, such that
\begin{eqnarray}\label{kp3}
\max\{|f|,\sqrt{\eta}\}\leq f_\eta\leq \|f\|_\infty +1.
\end{eqnarray}
Moreover an easy computation shows that
\begin{eqnarray*}
f_\eta^{'}&=&\frac{f\cdot f^{'}}{f_\eta},\\
f_\eta^{''}&=&\frac{f\cdot f^{''}}{f_\eta} + \frac{(f^{'})^2}{f_\eta}\cdot\frac{\eta}{f^2+\eta}.
\end{eqnarray*}
First, let us prove an assertion for $f_\eta$ that is similar to (\ref{kp}). Namely we aim to show that given $\epsilon>0$, such that $f(\epsilon)\neq 0$ and $f(T-\epsilon)\neq 0$, we have
\begin{eqnarray}\label{kp2}
{\mathcal L}_\eta := \int_\epsilon^{T-\epsilon}|f_\eta^{'}|^2 h(|f_\eta|)dt &\leq& \int_\epsilon^{T-\epsilon}|f_\eta^{''}||H(f_\eta)|dt + \left[f_\eta^{'}(T-\epsilon)H(f_\eta(T-\epsilon)) - f_\eta^{'}(\epsilon)H(f_\eta(\epsilon))\right]\nonumber\\ &=:&{\mathcal R1}_\eta + {\mathcal R2}_\eta.
\end{eqnarray}
Since
\begin{eqnarray*}
{\mathcal L}_\eta = \int_\epsilon^{T-\epsilon}f_\eta^{'}\cdot f_\eta^{'}h(f_\eta)dt,
\end{eqnarray*}
after integrating the right-hand side in the above equation by parts (which is justified by the fact that $f_\eta\in W^{2,1}_{loc}((0,T))$), we obtain
\begin{eqnarray*}
{\mathcal L}_\eta &=& -\int_\epsilon^{T-\epsilon}f_\eta^{''}\cdot H(f_\eta) dt + f_\eta^{'}(T-\epsilon)H(f_\eta(T-\epsilon)) - f_\eta^{'}(\epsilon)H(f_\eta(\epsilon))\\
&\leq& \int_\epsilon^{T-\epsilon}|f_\eta^{''}||H(f_\eta)|dt + f_\eta^{'}(T-\epsilon)H(f_\eta(T-\epsilon)) - f_\eta^{'}(\epsilon)H(f_\eta(\epsilon)),
\end{eqnarray*}
which finishes the proof of (\ref{kp2}). As expected, our next step is to converge with $\eta\to 0$ and obtain (\ref{kp}).
First let us deal with ${\mathcal R1}_\eta$. We have a.e.
\begin{eqnarray}
f_\eta&\searrow&|f|,\nonumber\\
|f_\eta^{''}|&\to& |f^{''}|,\nonumber\\
|f_\eta^{''}|&\leq& |f^{''}|+\frac{(f^{'})^2}{f_\eta},\label{star}
\end{eqnarray}
which implies that the integrand appearing in ${\mathcal R1}_\eta$ converges a.e. to $|f^{''}||H(|f|)|$. To ensure convergence of the integrals we will apply Lebesgue's dominated convergence theorem. To do this let us note, that
\begin{eqnarray*}
|H(f_\eta)|\leq \frac{1}{1-\theta}(\|f_\eta\|_\infty)^{1-\theta}
\end{eqnarray*}
and by (\ref{kp3}) and (\ref{star}) we have
\begin{eqnarray}\label{kppom}
|f_\eta^{''}||H(f_\eta)|\leq |f^{''}||H(f_\eta)| + \frac{|f^{'}|^2}{f_\eta}|H(|f_\eta|)|\leq C|f^{''}| + \frac{1}{1-\theta}|f^{'}|^2h(|f|),
\end{eqnarray}
where $C=\frac{1}{1-\theta}(\|f\|_\infty+1)^{1-\theta}$. However by (\ref{kpzal}) and (\ref{kpzal2}), the right-hand side in (\ref{kppom}) is integrable on $[\epsilon, T-\epsilon]$. Therefore by Lebesgue's dominated convergence theorem
\begin{eqnarray*}
\int_\epsilon^{T-\epsilon}|f_\eta^{''}||H(f_\eta)|dt\to\int_\epsilon^{T-\epsilon}|f^{''}|H(|f|)|dt\leq C\int_\epsilon^{T-\epsilon}|f^{''}|dt.
\end{eqnarray*}
Next we converge with ${\mathcal R2}_\eta$. We note that
\begin{eqnarray*}
f_\eta^{'}(t)\to \frac{f(t)\cdot f^{'}(t)}{|f(t)|},
\end{eqnarray*}
as long as $f(t)\neq 0$, which by the choice of $\epsilon$ is the case for $t=\epsilon$ and $t=T-\epsilon$. Thus
\begin{eqnarray*}
{\mathcal R2}_\eta\to \left[\frac{f(T-\epsilon)f^{'}(T-\epsilon)}{|f(T-\epsilon)|}H(|f(T-\epsilon)|) - \frac{f(\epsilon)f^{'}(\epsilon)}{|f(\epsilon)|}H(|f(\epsilon)|)\right].
\end{eqnarray*}
Lastly, by (\ref{kp2}) and Fatou's lemma
\begin{eqnarray*}
\int_\epsilon^{T-\epsilon}|f^{''}|^2h(|f|)dt&\leq& \liminf_{\eta\to 0} {\mathcal R1}_\eta+{\mathcal R2}_\eta\\
&\leq& C\int_\epsilon^{T-\epsilon}|f^{''}|dt + \left[\frac{f(T-\epsilon)f^{'}(T-\epsilon)}{|f(T-\epsilon)|}H(|f(T-\epsilon)|) - \frac{f(\epsilon)f^{'}(\epsilon)}{|f(\epsilon)|}H(|f(\epsilon)|)\right].
\end{eqnarray*}
The final step of the proof is to converge with $\epsilon\to 0$. This, however is straightforward by the monotone convergence theorem and by the fact that $f\in C^1([0,T])$. The only non-trivial part is that for the sake of convenience we choose a suitable sequence $\epsilon_n\searrow 0$, such that for all $n$, we have $f(\epsilon)\neq 0\neq f(T-\epsilon)$, which we can do since $f\neq 0$ a.e. in $[0,T]$. It is also worthwhile to note that we use the fact that the function
\begin{eqnarray*}
t\mapsto \frac{f(t)f^{'}(t)}{|f(t)|}H(|f(t)|) = \frac{f(t)}{|f(t)|^{\theta}}f^{'}(t)
\end{eqnarray*} 
is continuous, since $f, f^{'}$ and the function $\lambda\mapsto\frac{\lambda}{|\lambda|^{\theta}}$ are continuous (continuity at $0$ follows from the assumption that $\theta<1$).
\end{proof}
\begin{rem}\rm
Similar equation with multiple examples and applications can be found in \cite{ka1} or \cite{ka2}.
\end{rem}
\bibliographystyle{abbrv}
\bibliography{2}
\end{document}